\newtheorem{theorem}{Theorem}
\newtheorem{lemma}{Lemma}
\newtheorem{proposition}{Proposition}
\newtheorem{corollary}{Corollary}
\newtheorem{observation}{Observation}
\newcommand{\eps}{\varepsilon}
\newcommand{\NN}{\mathbb{N}}
\def\L{\mathcal{L}}
\def\Q{\mathcal{Q}}
\def\etal{{et~al.}}
\def\ie{{i.e.}}
\def\eg{{e.g.}}
\newcommand{\later}[1]{}
\newcommand{\old}[1]{}
\title{Disjoint empty disks supported by a point set}
\author{Adrian Dumitrescu\thanks{%
Department of Computer Science,
University of Wisconsin--Milwaukee, USA\@.
Email:~\texttt{dumitres@uwm.edu}.
Supported in part by NSF grant DMS-1001667.}
\and
Minghui Jiang\thanks{%
Department of Computer Science,
Utah State University, Logan, USA\@.
Email:~\texttt{mjiang@cc.usu.edu}.}}
\begin{document}

\maketitle

\begin{abstract}
For a planar point-set $P$, let $D(P)$ be the minimum number of
pairwise-disjoint empty disks such that each point in $P$ lies on the
boundary of some disk. 
Further define $D(n)$ as the maximum of $D(P)$ over all
$n$-element point sets. Hosono and Urabe recently conjectured that 
$D(n)=\lceil n/2 \rceil$.
Here we show that $D(n) \geq n/2 + n/236 - O(\sqrt{n})$
and thereby disprove this conjecture.

\medskip
\noindent
\textbf{\small Keywords}:
Empty disk, point-circle incidence, graph, connected component. 

\noindent
\textbf{\small Mathematics Subject Classification (MSC)}:
52C10 Erd\H{o}s problems and related topics of discrete geometry.

\end{abstract}

\section{Introduction} \label{sec:intro}

Hosono and Urabe~\cite{HU11}  have recently considered two families of
disks ``incident'' to a point set, as follows. 
For a given planar point set $P$, let $B(P)$ be the minimum number of
empty disks such that each point in $P$ lies on the boundary of some
disk. Further define the \emph{bubble number} $B(n)$ as the maximum of
$B(P)$ over all $n$-element point sets. 

Analogously, let $D(P)$ be the minimum number of
pairwise-disjoint empty disks such that each point in $P$ lies on the
boundary of some disk. 
Further define the \emph{disjoint bubble number} $D(n)$ as the maximum
of $D(P)$ over all $n$-element point sets. 

A point is said to \emph{support} an empty disk if it lies on the
boundary of the disk. Conversely, we say that the disk is 
\emph{supported} by the point; note that a disk can be supported by
multiple points. A point set is said to \emph{support} a set of empty
disks if each point supports at least one (empty) disk. 
Conversely, we say that a set of empty disks is 
\emph{supported} by the point-set. Such a set of empty disks
is also referred to as a \emph{bubble set} for the given set of points. 

Consider a set of points $P$. Since any set of pairwise-disjoint empty
disks supporting $P$ is simply a set of empty disks supporting $P$, 
we have $B(P) \leq D(P)$ for every $P$, and consequently
\begin{equation} \label{E1}
B(n) \leq D(n), \ \ {\rm for \ every \ }n.
\end{equation} 

Hosono and Urabe~\cite{HU11} have shown the following 
upper and lower bounds for $B(n)$: for every $n \geq 14$,
$ \lceil n/2 \rceil \leq B(n) \leq \lfloor (2n-2)/3) \rfloor$. 
They also conjectured that 
$B(n)=D(n)=\lceil n/2 \rceil$.
Here we show that out of three implied equalities,
$B(n)=D(n)$, $D(n)=\lceil n/2 \rceil$, 
and $B(n)=\lceil n/2 \rceil$, only the third holds,
while the first two are false.

Dillencourt~\cite{Di90} proved that every Delaunay triangulation of
$n$ points ($n$ even) has a perfect matching.
By definition, for every edge in a Delaunay triangulation,
there is an empty disk with the two endpoints of this edge on its
boundary. Consequently, we have the following.

\begin{proposition}\label{prop}
For every $n \ge 1$, $B(n) =\lceil n/2 \rceil$.
\end{proposition}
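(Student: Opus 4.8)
The plan is to prove the two matching bounds $\lceil n/2\rceil \le B(n)\le \lceil n/2\rceil$ separately; the upper bound is where Dillencourt's theorem does the work, while the lower bound is elementary. For the upper bound I want to show $B(P)\le\lceil n/2\rceil$ for an arbitrary $n$-point set $P$, and I would begin with the clean case where $P$ is in general position (no three collinear, no four concyclic) and $n$ is even. Then the Delaunay triangulation of $P$ is a genuine triangulation, so by Dillencourt's theorem it has a perfect matching $M$ consisting of $n/2$ edges that together cover all $n$ vertices. Every edge of the triangulation is a Delaunay edge and hence supports an empty disk through its two endpoints; choosing one such disk for each edge of $M$ produces $n/2$ empty disks that support all of $P$, so $B(P)\le n/2=\lceil n/2\rceil$.

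To reach the remaining cases I would use two reductions. For odd $n$ (still in general position), add one auxiliary point $q$ placed far from $P$, so that $P'=P\cup\{q\}$ is again in general position and has the even cardinality $n+1$. The even case applied to $P'$ yields $(n+1)/2$ empty disks supporting $P'$; since each such disk is empty with respect to $P'\supseteq P$ it is a fortiori empty with respect to $P$, and together these disks support every point of $P'\supseteq P$. Hence $(n+1)/2=\lceil n/2\rceil$ disks already support $P$, giving $B(P)\le\lceil n/2\rceil$. Degenerate configurations (collinear or concyclic points) I would reduce to the general-position case by a standard perturbation-and-limit argument: perturb $P$ to a nearby general-position set, extract $\lceil n/2\rceil$ supporting empty disks, and pass to the limit, verifying that emptiness and the point--circle incidences survive.

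For the lower bound I would exhibit a single extremal configuration: let $P$ consist of $n$ collinear points. Because a line meets any circle in at most two points, the boundary circle of any empty disk contains at most two points of $P$, so every empty disk is supported by at most two points of $P$. Consequently any family of empty disks supporting $P$ has at least $\lceil n/2\rceil$ members, i.e.\ $B(P)\ge\lceil n/2\rceil$, and therefore $B(n)\ge\lceil n/2\rceil$. The upper bound shows this collinear set is in fact extremal, and combining the two bounds yields $B(n)=\lceil n/2\rceil$ for every $n\ge1$.

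The step I expect to be most delicate is not the clean even case but the reductions surrounding it: the far-point trick for odd $n$ (ensuring the auxiliary point leaves $P'$ in general position and that its disks remain valid for $P$), and above all the limiting argument extending the upper bound from general position to arbitrary $P$, where one must rule out an empty disk degenerating in a way that destroys one of its incidences with $P$. Once those technicalities are settled, the heart of the argument is just Dillencourt's perfect-matching theorem combined with the defining empty-disk property of Delaunay edges.
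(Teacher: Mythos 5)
Your route is the same as the paper's: Dillencourt's perfect-matching theorem for Delaunay triangulations gives the upper bound, and $n$ collinear points give the lower bound (the paper presents exactly this collinear argument in its preliminaries). Your lower bound, your even/general-position case, and your far-point reduction for odd $n$ are all correct; indeed the paper silently skips the odd case, so there you are more careful than the source.

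The genuine problem is the step you yourself flagged: the perturbation-and-limit reduction for degenerate sets. It is not merely delicate; as described it fails, because the disks produced for the perturbed set can be forced to degenerate into half-planes whose incidences no single empty disk of the original set can realize. Concretely, let $p_1=(0,0)$, $p_2=(1,0)$, $p_3=(2,0)$, $q=(1,5)$, and perturb $p_2$ upward to $(1,\eta)$. For every small $\eta>0$ the perturbed set is in general position, its Delaunay triangulation contains the edge $p_1p_3$ (an empty circle through $p_1,p_3$ exists, centered far below the line), and $\{p_1p_3,\ p_2q\}$ is a perfect matching of that triangulation, which Dillencourt's theorem may hand you -- you do not get to choose. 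Every empty disk through $p_1$ and $p_3$ must then have radius of order $1/\eta$, so its limit is the empty half-plane $\{y\le 0\}$ with $p_1,p_2,p_3$ on its boundary line; but no empty disk of the original set passes through both $p_1$ and $p_3$, since $p_2$ lies strictly between them. So the incidences do not survive the limit, and the repair cannot be local: one must re-match (here to $p_1p_2$ and $p_3q$), which is a global argument, not a routine verification. The clean way to close the gap is to avoid perturbation altogether: handle the all-collinear case directly by matching consecutive pairs with small empty disks, and for sets not all collinear apply Dillencourt's theorem to a Delaunay triangulation of the degenerate set itself (any triangulation completing the Delaunay subdivision; each of its edges, including diagonals of cocircular faces, still lies on a circle with no point of $P$ in its interior). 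That is how the paper's one-sentence citation of Dillencourt should be read.
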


In contrast,
we show that $D(n) > \lceil n/2 \rceil$ for even $n$ as small as $174$.
\begin{theorem}\label{thm:174}
For every even $n \ge 174$,
$D(n) \ge n/2 + 1$.
\end{theorem}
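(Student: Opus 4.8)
The plan is to exhibit, for each even $n\ge 174$, an explicit $n$-point set $P$ for which no family of $n/2$ pairwise-disjoint empty disks can support all of $P$. I would build $P$ from two parts: a fixed finite \emph{gadget} $Q$ on an even number $m\le 174$ of points whose role is to force one ``wasted'' disk, and $n-m$ \emph{padding} points placed as $(n-m)/2$ tight pairs, the pairs mutually far apart and far from $Q$. Each padding pair $\{x,y\}$ is supported by a tiny empty disk (take the disk whose diameter is $xy$, shrunk and perturbed to be empty), and these tiny disks are pairwise disjoint. The target bound then follows from the chain
\[
D(P)\ \ge\ D(Q)+\frac{n-m}{2}\ \ge\ \Big(\frac{m}{2}+1\Big)+\frac{n-m}{2}\ =\ \frac n2+1 ,
\]
so the whole argument reduces to two tasks: proving $D(Q)\ge m/2+1$ for the gadget, and justifying the additivity in the first inequality.

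The additivity is governed by a \emph{non-interference lemma}: if the clusters (the single cluster $Q$ together with the $(n-m)/2$ padding pairs) are pairwise separated by a large enough factor relative to their diameters, then in any pairwise-disjoint empty bubble set for $P$ no single disk is supported by points of two different clusters. Granting this, every disk of an optimal bubble set for $P$ ``belongs'' to exactly one cluster, the bubble set splits into a bubble set for $Q$ and bubble sets for the pairs, and counting gives the additivity. The reason one expects the lemma to hold is that an empty disk through a point of one cluster and a point of a far cluster must be enormous, and a disk that large passing through two such points is forced to swallow intervening points; the delicate case is a nearly half-plane disk bulging into the unbounded region, which I would block by surrounding the configuration with a ring of ``guard'' points so that every sufficiently large disk is non-empty. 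Making this quantitative, and thereby fixing the separation constant and the number of guards, is where most of the technical work lies.

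For the gadget itself I would choose $Q$ in (near-)convex position and perturbed so that the empty disks it supports are tightly controlled: the only empty disks supported by two points lie along a cyclic chain of ``short'' pairs, and any empty disk supported by three or more points is so large that, in a disjoint family, it can be used at most rarely. Two things must then be shown. First, a pure counting bound: since in a disjoint family the disks are essentially localized to short pairs, the point--disk incidences force at least $m/2$ disks. Second, and this is the heart of the matter, I would rule out equality by a parity/deficiency argument: a bubble set of exactly $m/2$ disks would have to be a \emph{perfect matching of $Q$ by pairwise-disjoint empty $2$-point disks}, and the cyclic structure of the admissible short pairs, together with the requirement that consecutive chosen disks not overlap, obstructs any such perfect matching (one pairing is always left ``unclosed'', costing one extra disk). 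By Dillencourt's theorem a perfect matching by empty disks does exist, so it is precisely the disjointness that creates the deficiency, which is exactly the phenomenon separating $D(n)$ from $B(n)$.

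The step I expect to be the main obstacle is controlling \emph{all} empty disks simultaneously --- in particular the large and three-point disks --- rather than only the obvious small ones. Both the non-interference lemma and the ``no perfect disjoint matching'' claim can be defeated by an unexpectedly placed large empty disk that pairs up two otherwise-separated leftover points, or that carries three points at once and thereby saves a disk. Pinning down the guard ring and the perturbation of $Q$ so that every empty disk is either a controlled short-pair disk or demonstrably non-empty or overlapping is the crux; once that geometric control is in hand, the counting and the assembly into the bound for general even $n\ge 174$ are routine.
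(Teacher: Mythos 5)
Your proposal has two genuine gaps, and the first is fatal to the plan as described. The heart of Theorem~\ref{thm:174} is precisely the existence of a gadget $Q$ with $D(Q)\ge m/2+1$, and you leave this unproven; moreover, the mechanism you sketch for it cannot work. For a set in (near-)convex position there is no parity or cyclic obstruction to a disjoint perfect matching: one can always match consecutive pairs along the convex chain by tiny pairwise-disjoint empty disks (handling an odd leftover point with a singleton disk), achieving exactly $\lceil m/2\rceil$ disks. The deficiency exploited in the paper is metric, not combinatorial, and it hinges on a point that is \emph{not} in convex position: the gadget $G_n$ is the center $p$ of a circle together with the vertices of a regular $(n-1)$-gon. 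Any disk covering $p$ either touches no boundary point (giving $n/2+1$ at once) or ``exerts pressure'' on the nearby boundary points: the paper shows by an explicit cosine-rule computation (equations~\eqref{eq:cosine1}--\eqref{eq:cosine3} and the numerical values in~\eqref{eq:values}) that two two-point disks on the four boundary points following the contact points would be forced to overlap once $\theta=\pi/(n-1)$ is small enough, so singleton points appear on both sides of $Q$, and these singleton disks push the count to $n/2+1$. This computation is exactly where the threshold $174$ comes from, and it also explains why no \emph{fixed small} gadget is available to you: with $m\le 174$ forced, your gadget bound $D(Q)\ge m/2+1$ is essentially the $n=174$ case of the theorem itself, so your reduction leaves the base case unproven.

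The second gap is the non-interference lemma. You correctly identify that separation alone does not prevent an empty half-plane-like disk through two far-apart clusters, but the guard ring does not repair this: a disk through one cluster point and one nearby guard point need not be huge relative to the ring (its radius is on the order of the cluster-to-ring distance), and such a disk can cross the ring almost tangentially so as to miss all other guards and all other cluster points, hence be empty. So the statement ``no disk is supported by points of two different clusters'' fails between clusters and the ring, and with it the clean additive count $D(P)\ge D(Q)+(n-m)/2$. This difficulty is not routine: the paper's own multi-gadget construction (Theorem~\ref{thm:257}) must admit exactly such ``bridge disks'', charge them to designated gadgets, and then redo the entire pressure analysis against a straight line (equation~\eqref{eq:cosine2'} and its three subcases) to show that the remaining gadgets still support enough local disks. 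For Theorem~\ref{thm:174} itself the paper sidesteps interference entirely by making the gadget be the whole point set $G_n$, checking the key inequality $|o_1o_2|<r_1+r_2$ at $n=174$, and observing that it persists for all larger even $n$; if you want the padding route, you must either prove a quantitative version of your non-interference claim that survives the examples above, or carry out a bridge-disk analysis of the paper's kind.
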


Proposition~\ref{prop} and Theorem~\ref{thm:174} together imply
that for every even $n \ge 174$,
$B(n) \neq D(n)$ and $D(n) > \lceil n/2 \rceil$,
which disprove the conjecture of Hosono and Urabe~\cite{HU11} that
$B(n)=D(n)=\lceil n/2 \rceil$.
Our main result is the following sharper bound that holds for every $n$.
\begin{theorem}\label{thm:236}
$D(n) \geq n/2 +n/236 - O(\sqrt{n})$. 
\end{theorem}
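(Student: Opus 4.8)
The plan is to prove the asymptotic lower bound $D(n) \geq n/2 + n/236 - O(\sqrt{n})$ by constructing an explicit family of point sets that forces many ``extra'' disks, and the natural approach is to \emph{tile the plane with a gadget} whose local structure obstructs a near-perfect matching of points into disjoint empty disks. The constant $236$ strongly suggests that a small hard configuration on a constant number of points is being amortized: if one can build a gadget on some $k$ points for which any set of pairwise-disjoint empty disks supporting it needs at least $k/2 + 1$ disks (one more than the trivial pairing), and if $\Theta(n/k)$ essentially independent copies of this gadget can be packed so that disks supporting one copy cannot help another, then each copy contributes one surplus disk and the total becomes $n/2 + \Omega(n/k)$. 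Matching $1/236$ would correspond to roughly $k = 118$ points per gadget, or to a gadget whose surplus is larger than one spread over fewer points; the first step is therefore to isolate and analyze this base gadget carefully.

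First I would set up the combinatorial framework: given a finite point set and a supporting family of pairwise-disjoint empty disks, I would charge each point to one disk it supports and study the resulting ``disk graph''. The key structural fact to exploit is that disjointness of the disks, together with emptiness (no point of $P$ inside any disk), severely limits how many points can share a disk and how disks can interleave; in particular, three collinear or three cocircular points impose rigid constraints. I would formalize a local lemma of the form: for the gadget configuration, in any valid disjoint bubble set the number of disks supported exceeds half the number of gadget points by at least a fixed amount. The cleanest route is a parity or counting argument — for instance, showing that some designated points can only be supported by singleton disks (disks touching exactly one point of the gadget), each of which wastes half its ``matching capacity''.

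The second phase is the packing and independence argument. After fixing the gadget on $k$ points achieving surplus $s$, I would place $m = \Theta(n/k)$ scaled copies far apart, or arrange them on a grid with large separation, so that any empty disk large enough to touch points from two different copies would necessarily contain a point of some copy in its interior, contradicting emptiness; this ``locality'' guarantees that the minimum disjoint bubble set decomposes across copies and the surpluses add. Summing gives $D(P) \geq (mk)/2 + ms = n/2 + (s/k)\,n + O(1)$ for $n = mk$, and the $O(\sqrt{n})$ error term absorbs boundary effects and the rounding when $n$ is not a multiple of $k$ (padding with $O(\sqrt n)$ points placed in convex position, each of which can trivially support its own disk without disturbing the gadgets).

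The main obstacle I anticipate is the \emph{locality/independence step combined with emptiness}: disks are unbounded regions, so even well-separated gadgets can in principle be ``served'' by a single enormous disk whose boundary grazes one point from each of several gadgets while its interior stays empty, and ruling this out requires a genuine geometric argument rather than a hand-wave. I would handle this by arranging the gadget copies not merely far apart but in \emph{convex position around a large circle} (or by surrounding each gadget with auxiliary points) so that any disk touching two copies is forced to enclose intervening points; verifying emptiness then reduces to a curvature/separation estimate. The other delicate point is proving the base-gadget surplus lemma tightly enough that the per-point contribution reaches $1/236$ rather than something smaller — this is where the precise constant is won or lost, and I expect the bulk of the technical work, and any slack in the final constant, to live in the exact analysis of that one finite configuration.
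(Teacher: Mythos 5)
The fatal gap is your locality/independence step, and it cannot be repaired by separation distance, rescaling, or placing the copies in convex position. An empty disk can have arbitrarily large radius: no matter how far apart two gadget copies are, there is a disk of enormous radius whose boundary grazes one point of each copy and whose interior contains no point of the configuration at all (near the convex hull of the whole point set such disks degenerate to halfplanes and always exist). Emptiness only forbids points in the \emph{interior}, so ``bridge'' disks supported by two copies survive every packing you propose; the minimum bubble set therefore does not decompose across copies, and summing the per-copy surplus lemmas double-counts the bridge disks. Your auxiliary-point fix fails for the same reason: any point on the convex hull of a copy admits a tangent, halfplane-like empty disk approaching from outside, regardless of what other points are placed around it. There is also an arithmetic slip: a gadget on $k$ points with surplus one gives $n/2 + n/k$, so your scheme needs $k=236$, not $k\approx 118$.

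This bridge-disk difficulty is exactly what the paper wrestles with, and its resolution shows what it costs. The paper's circular construction (Theorem~\ref{thm:257}) is your plan carried out correctly: gadgets arranged along a line, with every bridge disk charged to the smaller ($174$-point) gadgets, while the alternating $340$-point gadgets must be proven surplus-robust even when one or two line-like bridge disks press on their boundary points; the constant obtained this way is only $1/257$, short of $1/236$. The bound of Theorem~\ref{thm:236} comes from a genuinely different, non-gadget construction: $\Theta(\sqrt{n})$ dense horizontal lines of closely spaced points alternating with $\Theta(\sqrt{n})$ sparse lines, where any non-singleton disk through an interior (sparse-line) point is large, must press against the adjacent dense lines, and thereby forces singleton points there via a tangent-disk calculation (Lemmas~\ref{L4}--\ref{L6}); each interior point contributes a surplus of $1/2$, amortized over the path components of an auxiliary graph of large disks (Lemma~\ref{L7}), and the choice $j,k=\Theta(\sqrt{n})$ for the grid dimensions is what produces the $O(\sqrt{n})$ error term. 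So beyond the gap, the route you outline, even if completed along the lines the paper itself uses, lands at a weaker constant than the theorem asserts.
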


In a related work, \'Abrego~\etal~\cite{AA+05,AA+09} consider the
problem of matching a given set of $n$ points ($n$ even) by using $n/2$ disks
with exactly two points in each disk.
When the disks could overlap, such a perfect matching always exists if
the $n$ points are in general position.
Similar to Proposition~\ref{prop}, this follows from the result of
Dillencourt~\cite{Di90} that every Delaunay triangulation of $n$
points ($n$ even) has a perfect matching.
Further, we can shrink the $n/2$ disks until each disk is empty
and has the two points on its boundary. 

On the other hand, when the disks are required to be pairwise-disjoint,
a perfect matching need not exist.  \'Abrego~\etal~\cite{AA+05,AA+09} 
used the same circular construction as in our proof of Theorem~\ref{thm:174}
(details in Section~\ref{sec:circular})
to prove that for even $n \ge 74$, that particular $n$-element point 
set does not admit any perfect matching with $n/2$ pairwise-disjoint
empty disks. On the positive side, they showed that any $n$ points
in the plane (in general position) admit a matching of $n/4 - O(1)$ points
with pairwise-disjoint empty disks.
One can verify that for any $n$ points in the plane
(not necessarily in general position),
their proof implies that there exists a set of $n/8 - O(1)$ disjoint empty
disks incident to at least $n/4 - O(1)$ points such that
each disk is supported by at least two points.
As a consequence we have $D(n) \leq n/8 + 3n/4 + O(1) = 7n/8 + O(1)$.
This leads to the following upper bound complementing the lower bound
in Theorem~\ref{thm:236}.

\begin{proposition}
$D(n) \leq 7n/8 + O(1)$. 
\end{proposition}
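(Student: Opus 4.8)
The plan is to combine the disjoint empty disks produced by the construction of \'Abrego~\etal~\cite{AA+05,AA+09} with a ``padding'' step that covers the remaining points by arbitrarily small disks. By the reinterpretation of their proof stated above, the point set $P$ supports a family $\mathcal{F}$ of $m = n/8 - O(1)$ pairwise-disjoint empty disks, each supported by at least two points, whose boundaries together pass through a set $Q \subseteq P$ of at least $n/4 - O(1)$ points. Set $R = P \setminus Q$, so that $|R| \le 3n/4 + O(1)$. Every point of $R$ fails to support any disk of $\mathcal{F}$, and it remains to equip each such point with its own disjoint empty disk.

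First I would observe that each point $r \in R$ lies strictly outside every disk of $\mathcal{F}$. Indeed, the disks of $\mathcal{F}$ are empty, so no point of $P$ lies in the interior of any of them; and if $r$ were on the boundary of some disk of $\mathcal{F}$ it would belong to $Q$, contrary to $r \in R$. Hence the distance $\delta$ from any point of $R$ to the nearest disk of $\mathcal{F}$ is positive, and the minimum interpoint distance of $P$ is also positive. Choosing a single radius $\eps$ smaller than a suitable constant fraction of both quantities, I would attach to each $r \in R$ a disk of radius $\eps$ with $r$ on its boundary. A disk so attached to $r$ is contained in the ball of radius $2\eps$ about $r$; for $\eps$ small enough these balls are pairwise disjoint, each contains no point of $P$ other than $r$, and each is disjoint from every disk of $\mathcal{F}$. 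Consequently all the new disks are empty and, together with $\mathcal{F}$, pairwise disjoint.

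This yields $|R|$ additional disks, one supported by each point of $R$, while every point of $Q$ already supports a disk of $\mathcal{F}$. The resulting collection therefore supports all of $P$ and consists of pairwise-disjoint empty disks, so $D(P) \le m + |R| \le n/8 + 3n/4 + O(1) = 7n/8 + O(1)$. Taking the maximum over all $n$-point sets gives $D(n) \le 7n/8 + O(1)$. The only delicate point is the geometric realizability in the second step---ensuring that the small disks can simultaneously be made empty, pairwise disjoint, and disjoint from the large disks---but this follows routinely from the finiteness of $P$ and of $\mathcal{F}$, which guarantees that all the relevant distances are bounded below by a positive constant.
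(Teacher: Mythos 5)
Your proof is correct and follows essentially the same route as the paper: invoke the reinterpreted result of \'Abrego~et~al.\ to obtain $n/8 - O(1)$ disjoint empty disks covering $n/4 - O(1)$ points, then give each of the remaining at most $3n/4 + O(1)$ points its own tiny empty disk, for a total of $7n/8 + O(1)$. The paper leaves the padding step implicit; your explicit verification that the small disks can be made empty, pairwise disjoint, and disjoint from the large disks is exactly the routine detail the paper omits.
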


To put our constructions in context,
we mention some constructions with disks
and squares which are build, similar to ours, by using the same principle
of ``exerting pressure''. For illustration, the reader is referred 
to the articles~\cite{Aj73,BDJ10a,BDJ08b,PT96}
for other constructions that we see related in this spirit.
\begin{figure}[htb]
\center\includegraphics[scale=0.95]{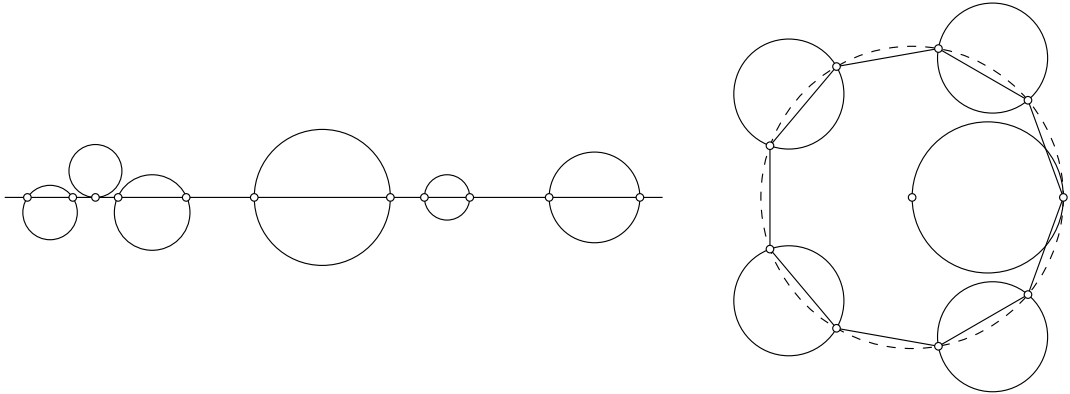}
\caption{Two constructions of Hosono and Urabe.
Left: a set of $n=11$ points on a line.
Right: a set of $n=10$ points consisting of the center and the vertices
of a regular $(n-1)$-gon.}
\label{fig:hosono_urabe}
\end{figure}

\paragraph{Preliminaries.}

In this paper we restrict ourselves to disjoint bubbles,
in particular to the estimation of $D(n)$. 
Note that since any two disks supported by a common point must
be interior-disjoint, each point supports at most two disks.
Since all disks we consider are empty, such a disk $Q$ is incident to
a point $p$ if and only if its boundary, $\partial Q$, is incident to
$p$. Before presenting our lower bound construction, we briefly review the
two constructions from~\cite{HU11} yielding the lower bound
$D(n) \geq n/2$. In fact, our constructions yielding Theorems~\ref{thm:174}
and~\ref{thm:236} were suggested by them. 

\begin{enumerate}
\item
Let $P$ be a collinear set of $n$ points; refer to
Figure~\ref{fig:hosono_urabe} (left). Any point must
support at least one empty disk, but no empty disk can be supported
by more than two points. This point set yields $B(P) \geq |P|/2$, 
thus $B(n) \geq B(P) \geq |P|/2 =n/2$, and by~\eqref{E1},
$D(n) \geq B(n) \geq n/2$.
\item
Assume that $n$ is even. Let $P$ consist of the vertices of a
regular $(n-1)$-gon together with its center (\ie, the center of the 
circumscribed circle of the regular $(n-1)$-gon); refer to
Figure~\ref{fig:hosono_urabe} (right).
The circle through any three vertices of the regular $(n-1)$-gon
must be its circumscribed circle,
which contains its center and hence is not empty.
It follows that any empty disk can be supported by
at most two (consecutive) vertices of the regular $(n-1)$-gon.
Thus for every even $n \ge 4$, $B(n) \ge B(P) \ge \lceil (n-1)/2 \rceil = n/2$,
and by~\eqref{E1}, $D(n) \geq B(n) \geq n/2$.
\end{enumerate}

\paragraph{Notation.}
An empty disk incident to only one point of $P$ is called a 
\emph{singleton disk}.
In turn, the unique point on a singleton disk is called a 
\emph{singleton point}. 

Given a disk $Q$, a horizontal line $\ell$, so that
$Q \cap \ell \neq \emptyset$, let $\xi(Q,\ell)$ denote the 
point of intersection between $\ell$ and the vertical diameter of $Q$.
For convenience, points on horizontal lines are frequently identified
with their $x$-coordinates when there is no danger of confusion.

\paragraph{Organization of the paper.}
In Section~\ref{sec:circular} we use a circular construction 
showing that the conjectured equality $D(n)=\lceil n/2 \rceil$ is already
false for $n$ as small as $174$. 
We then use this construction to obtain a lower bound of
$D(n) \geq n/2 + n/257- O(1)$ for every $n$.
In Section~\ref{sec:linear} we present a linear construction 
and its grid-generalization which achieves our record lower bound of 
$D(n) \geq n/2 + n/236- O(\sqrt{n})$ for every $n$.

\section{Circular constructions} \label{sec:circular}

For any $n \ge 4$, define an \emph{$n$-gadget} $G_n$ as a set of $n$ points
consisting of the center $p$ of a unit-radius circle $C$ and the vertices
$q_1,\ldots,q_{n-1}$ of a regular $(n-1)$-gon inscribed in the circle.
We call $p$ the \emph{center point} and $q_1,\ldots,q_{n-1}$ the 
\emph{boundary points}. We refer to Figure~\ref{fig:n12} for a
$12$-gadget. 
\begin{figure}[htb]
\centering\includegraphics[scale=0.95]{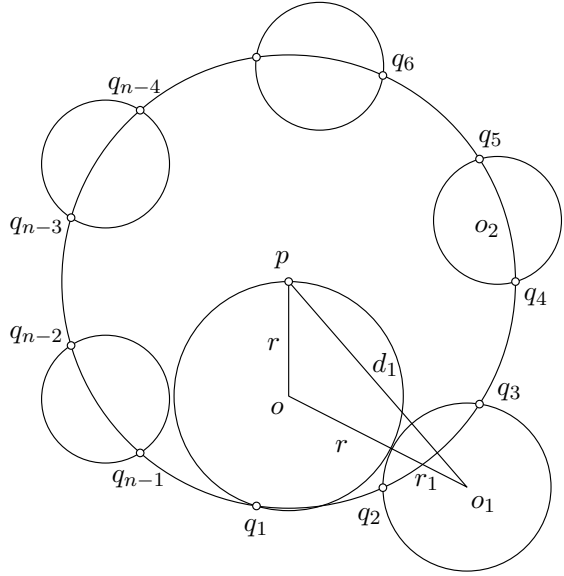}
\caption{An $n$-gadget with $n = 12$.}
\label{fig:n12}
\end{figure}

Hosono and Urabe noted that for any even $n \ge 4$,
the bubble number $B(G_n)$ of the $n$-gadget $G_n$ is at least $n/2$.
To see this, observe that the circle through any three points on the boundary
of $C$ must be $C$ itself,
which contains its center $p$ and hence is not empty.
It follows that any empty disk can be supported by
at most two (consecutive) points on the boundary of $C$.
Thus for every even $n \ge 4$,
the $n-1$ boundary points must support at least
$\lceil (n-1)/2 \rceil = n/2$ empty disks.

Hosono and Urabe then conjectured that for every $n$, $D(n) = \lceil n/2 \rceil$.
Our Theorem~\ref{thm:174} shows that this conjecture is false,
for even $n$ as small as $174$. Specifically, we prove that for every
even $n \ge 174$, $D(G_n) \ge n/2 + 1$.

\paragraph{Proof of Theorem~\ref{thm:174}.}
Consider any bubble set $\Q$ of disjoint empty disks for $G_n$.
We will show that $|\Q| \ge n/2 + 1$.
Recall that any empty disk can be supported by at most $2$ boundary points.
Thus the $n-1$ boundary points of $G_n$ support at least
$\lceil (n-1)/2 \rceil = n/2$ empty disks in $\Q$.
Let $Q \in \Q$ be an empty disk supported by the center point $p$ of $G_n$.
If $Q$ is not supported by any boundary point,
then the total number of disks in $\Q$ is at least $n/2 + 1$.
We next consider the two remaining cases:
\begin{enumerate}

\item
$Q$ is supported by the center point $p$ and only one boundary point, say $q_1$.
We claim that at least one of the four points $q_2,q_3,q_4,q_5$
and at least one of the four points $q_{n-1},q_{n-2},q_{n-3},q_{n-4}$
are singleton points.

\item
$Q$ is supported by the center point $p$ and two consecutive boundary points,
say $q_1$ and $q_2$.
We claim that at least one of the four points $q_3,q_4,q_5,q_6$
and at least one of the four points $q_{n-1},q_{n-2},q_{n-3},q_{n-4}$
are singleton points.

\end{enumerate}
If our claims in the two cases are valid, then,
excluding the at most two boundary points supporting $Q$
and two singleton points near $Q$,
the remaining at least $(n-1) - 2 - 2 = n-5$ boundary points would support
at least $\lceil (n-5)/2 \rceil = n/2 - 2$ empty disks.
Adding back the disk $Q$
and the two disks supported by the two singleton points,
the total number of empty disks would thus be at least
$(n/2 - 2) + 1 + 2 = n/2 + 1$, as desired.

We first consider the first part of the claim in case~1,
that at least one of the four points $q_2,q_3,q_4,q_5$ is a singleton point.
and prove it by contradiction.
Suppose to the contrary that none of $q_2,q_3,q_4,q_5$ is a singleton point.
Then there must exist a disk $Q_1 \in \Q$ supported by both $q_2$ and $q_3$,
and a disk $Q_2 \in \Q$ supported by both $q_4$ and $q_5$.
We will show that for every even $n \ge 174$,
the two disks $Q_1$ and $Q_2$ intersect,
which contradicts our assumption of disjoint empty disks in the bubble set.

Put $\theta = \pi/(n-1)$.
Let $r$ be the radius, and $o$ the center, of the disk $Q$.
For $i \in \{1,2\}$,
the radius $r_i$ of the disk $Q_i$ is uniquely determined by
the distance $d_i$ from the disk center $o_i$ to $p$,
according to the cosine rule:
\begin{equation}\label{eq:cosine1}
r_i^2 = 1^2 + d_i^2 - 2\cdot 1 \cdot d_i \cdot \cos\theta
= d_i^2 - 2 d_i \cos\theta + 1.
\end{equation}
Without loss of generality, we assume that
both $Q_1$ and $Q_2$ are tangent to $Q$.
Then the distance between $o$ and $o_i$ is exactly $r + r_i$.
Thus again by the cosine rule, we have
\begin{equation}\label{eq:cosine2}
(r + r_i)^2 = r^2 + d_i^2 - 2\cdot r \cdot d_i \cdot \cos \angle opo_i.
\end{equation}
Given $r$ and $\angle opo_i$ (these parameters will be fixed later 
in the argument, in equations~\eqref{eq:parameters} and~\eqref{eq:r}),
the distance $d_i$ and the radius $r_i$ for each $i \in \{1,2\}$ are uniquely
determined by the two equations \eqref{eq:cosine1} and \eqref{eq:cosine2}.
This can be seen by a morphing argument.
Observe that as $d_i$ changes continuously,
the portion of the disk $Q_i$ inside the circle $C$ also changes continuously:
the portion for a smaller $d_i$ properly contains the portion
for a larger $d_i$.
Subsequently, given $\angle o_1 p o_2$,
the distance $|o_1 o_2|$ between $o_1$ and $o_2$
is uniquely determined by applying the cosine rule once more:
\begin{equation}\label{eq:cosine3}
|o_1o_2|^2 = d_1^2 + d_2^2 - 2\cdot d_1 \cdot d_2 \cdot \cos \angle o_1 p o_2.
\end{equation}
To verify that $Q_1$ and $Q_2$ intersect,
it suffices to show $|o_1o_2| < r_1 + r_2$ for every even $n \ge 174$.

Before we proceed with the calculation,
observe that the second part of the claim in case~1,
that at least one of the four points $q_{n-1},q_{n-2},q_{n-3},q_{n-4}$
is a singleton point, can be proved by contradiction in a similar way,
where $Q_1$ is a disk supported by both $q_{n-1}$ and $q_{n-2}$,
and $Q_2$ is a disk supported by both $q_{n-3}$ and $q_{n-4}$.
In particular, equations \eqref{eq:cosine1},
\eqref{eq:cosine2}, and \eqref{eq:cosine3} still apply.
Moreover, the same argument also applies to case~2.
Indeed case~2 is the bottleneck case since the disk $Q$ in case~2
exerts less pressure on the four points near it on each side
than the disk $Q$ in case~1 does.
Since the two parts of the claim in case~2 are symmetric,
it suffices to do the calculation for the first part of the claim
in case~2 that at least one of $q_3,q_4,q_5,q_6$ is a singleton point.

We now do the calculation for this part,
where $Q$ is supported by both $q_1$ and $q_2$,
$Q_1$ is supported by both $q_3$ and $q_4$,
and
$Q_2$ is supported by both $q_5$ and $q_6$.
We clearly have
\begin{equation}\label{eq:parameters}
\angle opo_1 = 4\theta,\quad
\angle opo_2 = 8\theta,\quad
\angle o_1 p o_2 = 4\theta.
\end{equation}
Since $|pq_1| = |pq_2| = 1$, $|op| = |oq_1| = |oq_2| = r$,
and $\angle opq_1 = \angle opq_2 = \theta$,
we also have
\begin{equation}\label{eq:r}
r = 1/(2\cos\theta).
\end{equation}
Plug the parameters in
\eqref{eq:parameters}
and \eqref{eq:r}
into the three equations
\eqref{eq:cosine1},
\eqref{eq:cosine2},
and
\eqref{eq:cosine3}. 
One can verify that for $n = 174$,
the values in~\eqref{eq:values} satisfy the equations.
\begin{equation}\label{eq:values}
\begin{array}{cc}
\theta = 1.0404\ldots^\circ,\quad
r = 0.5000\ldots,\smallskip\\
d_1 = 1.0762\ldots,\quad
d_2 = 1.0113\ldots,\quad
r_1 = 0.0785\ldots,\quad
r_2 = 0.0215\ldots,\smallskip\\
|o_1 o_2| = 0.0997\ldots,\quad
r_1 + r_2 = 0.1000\ldots.
\end{array}
\end{equation}
In particular,
$|o_1 o_2| < r_1 + r_2$ for $n = 174$,
and hence for every even $n \ge 174$.
Thus for every even $n \ge 174$, the two disks $Q_1$ and $Q_2$ intersect,
giving the desired contradiction.
This completes the proof.
\qed

\begin{theorem}\label{thm:257} For every $n$, 
$D(n) \ge (1/2 + 1/257)n - O(1)$.
\end{theorem}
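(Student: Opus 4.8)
The plan is to amplify the single-gadget bound of Theorem~\ref{thm:174} by taking many copies of a gadget and placing them far apart, so that the constant gain of each copy accumulates into a linear gain. Concretely, fix an even constant $m$ and let $P$ consist of $k=\lfloor n/m\rfloor$ congruent copies $G^{(1)},\dots,G^{(k)}$ of the gadget $G_m$, placed so that the pairwise distances between them are enormous compared to their common diameter (padding with $O(1)$ extra points so that $|P|=n$). Writing $\Q$ for an arbitrary bubble set and $D=|\Q|$, the key bookkeeping device is an incidence identity: for each disk $Q\in\Q$ let $s_Q\ge 1$ be the number of points of $P$ on $\partial Q$; since every point supports at least one disk, $\sum_{Q\in\Q}s_Q\ge n$. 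Let $x_1$ be the number of singleton disks ($s_Q=1$) and let $E=\sum_{Q:\,s_Q\ge 3}(s_Q-2)$ be the total excess from disks with three or more supporting points. Since $\sum_Q(2-s_Q)=x_1-E$, we obtain
\begin{equation*}
2D=\sum_{Q\in\Q}s_Q+(x_1-E)\ge n+x_1-E,\qquad\text{hence}\qquad D\ge\frac{n}{2}+\frac{x_1-E}{2}.
\end{equation*}
Thus it suffices to produce $\Omega(k)$ singleton disks while keeping the excess $E$ under control.

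In the idealized situation where every empty disk is supported by points of a single gadget, the conclusion is immediate: three co-circular points of one gadget would lie on its circumscribed circle, which contains the center and hence is not empty, so $s_Q\le 2$ for every disk (whence $E=0$), and the pressure argument behind Theorem~\ref{thm:174}, applied independently to the center of each copy, forces two singleton points per copy, \ie\ $x_1\ge 2k$. The identity then gives $D\ge n/2+k=n/2+n/m$, which for $m=174$ would already yield a gain of $n/174$.

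The main obstacle is that the copies do not interact trivially: an empty disk may be supported by points from several gadgets at once. Such a \emph{crossing} disk is necessarily large (its diameter is at least the inter-gadget spacing), and it causes two kinds of damage. First, a crossing disk may pass through three or more points drawn from distinct gadgets; these need not be co-circular with the center of any single gadget, so such a disk can be empty and contributes to $E$. Second, and more seriously, a crossing disk can pair a boundary point of one gadget with a point of a far-away gadget, so that this point is no longer forced to be a singleton; this threatens the per-copy pressure argument, since a point in the critical window $q_2,\dots,q_5$ near the center's disk $Q$ might be ``rescued'' outward instead of being paired within its own gadget. The heart of the proof is therefore a geometric accounting of crossing disks. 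Since the disks of $\Q$ are pairwise disjoint and each crossing disk is large, only a bounded number of the boundary points on a common short arc can be rescued simultaneously by mutually disjoint crossing disks emanating from that arc; feeding this local bound back into the pressure argument shows that a positive fraction of the $k$ centers still forces a singleton, while a disjointness (packing) argument simultaneously bounds the global excess $E$. I expect this crossing analysis, rather than any single computation, to be the crux.

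Finally, one optimizes the gadget size $m$ against the loss incurred by crossing disks: a larger $m$ makes the pressure argument more robust, forcing a singleton even when a few nearby boundary points are rescued, but dilutes the per-point gain $1/m$. Balancing these two effects, and reusing the numerical verification already carried out for $m=174$ in the proof of Theorem~\ref{thm:174}, yields a net gain $\tfrac{1}{2}(x_1-E)\ge n/257-O(1)$, that is $D\ge n/2+n/257-O(1)$, as claimed. Since the $k$ gadgets can in fact be placed at mutually bounded pairwise distances (for instance spaced along one large circle), the additive error is only $O(1)$, matching the statement.
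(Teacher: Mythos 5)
Your high-level plan (replicate gadgets with enormous spacing, then control the disks that interact with more than one gadget) is indeed the paper's strategy, and your counting identity $D\ge n/2+(x_1-E)/2$ is a legitimate bookkeeping device. But the proof has a genuine gap exactly where you say you ``expect'' the crux to be: the analysis of crossing disks is never carried out, and the one concrete claim you make about it --- that the numerical verification from the $m=174$ case can be \emph{reused} --- is false. Near a far-away gadget, a crossing disk of radius $\Omega(d)$ behaves like a straight \emph{line}, not like the small disk $Q$ of radius $\approx 1/2$ appearing in the proof of Theorem~\ref{thm:174}; the tangency condition changes (the paper replaces \eqref{eq:cosine2} by the line-tangency equation \eqref{eq:cosine2'}), and, more importantly, the required conclusion is \emph{stronger}. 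A crossing disk's contribution to the count is shared with (in the paper: charged to) another gadget, so a gadget touched by a crossing disk must generate its whole surplus from singletons alone; this forces one to prove that \emph{two} of the four points on each side of the line are singletons, not one. That doubled claim does not hold for $174$-gadgets; the paper needs gadgets of size $340$ and three new subcases (i)--(iii) whose margins are razor-thin ($|o_1o_2|=0.0794\ldots$ versus $r_1+r_2=0.0795\ldots$), so no soft packing argument of the kind you gesture at will substitute for this computation.

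This also explains why your construction cannot yield the stated constant. With identical gadgets of a single size $m$, every gadget must be large enough to support the doubled-singleton claim, and the gain is at best about $n/340$. The constant $257$ in the theorem is not the output of an unspecified ``balancing''; it is $2/(174+340)$, arising from the paper's \emph{two-size} alternating construction: bridge disks are charged to the $174$-gadgets, which then only need the weak (Theorem~\ref{thm:174}) claim, while the $340$-gadgets must produce $340/2+1$ \emph{local} disks via the strong claim. Without this asymmetric charging scheme and the new numerics for lines pressing on $340$-gadgets, the bound $(1/2+1/257)n-O(1)$ is not reached. (A minor additional inconsistency: your closing remark that the gadgets can be placed ``at mutually bounded pairwise distances'' contradicts your own requirement of inter-gadget spacing enormous relative to the gadget diameter; in any case the $O(1)$ error term comes simply from rounding $n$ down to a multiple of the block size, together with monotonicity of $D(n)$, not from the placement.)
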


\begin{proof}
Let $m = \lfloor n / (174 + 340) \rfloor$, and let $\hat n = (174 + 340)m$.
Since $D(n)$ is a non-decreasing function in $n$,
it suffices to show that $D(\hat n) \ge (1/2 + 1/257)\hat n = \hat n / 2 + 2m$.
To this end, we will construct a set $P$ of $\hat n$ points,
and then show that $D(P) \ge \hat n / 2 + 2m$.
\begin{figure}[htb]
\centering\includegraphics[scale=0.9]{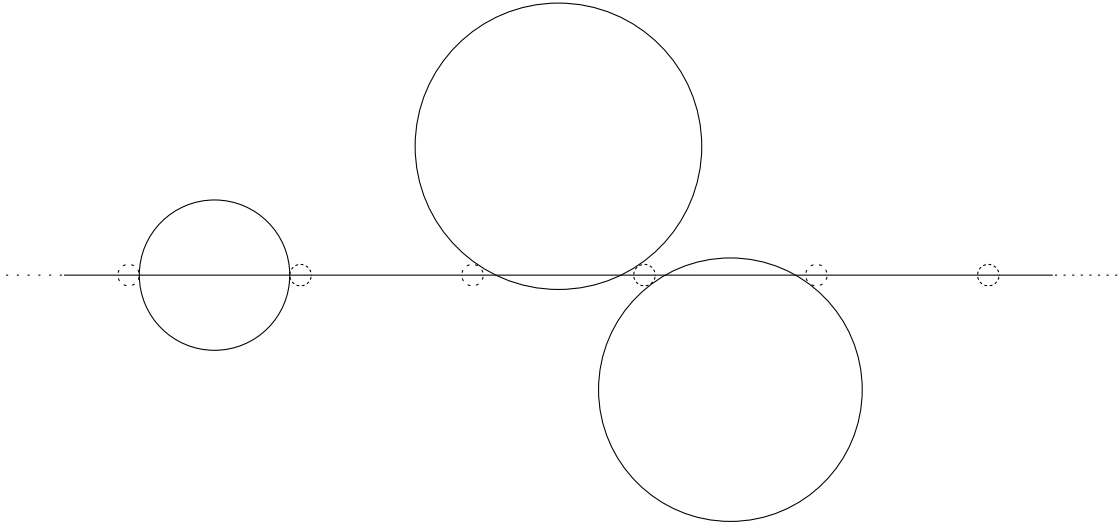}
\caption{The set $P$ consists of $m$ $174$-gadgets and $m$ $340$-gadgets
in an alternating pattern along a horizontal line.
Each bridge disk (three are shown here) is supported by two or more
points from two consecutive gadgets.} 
\label{fig:alternating}
\end{figure}

The set $P$ of $\hat n$ points are constructed as follows.
Refer to Figure~\ref{fig:alternating}.
Arrange $m$ $174$-gadgets and $m$ $340$-gadgets in an alternating
pattern along a horizontal line,
with a very large horizontal distance $d$ between the center points of
consecutive gadgets.
For each gadget, place the center point and one boundary point on the $x$-axis,
so that the remaining boundary points are symmetric about the $x$-axis
(this is possible because the number of points in each gadget is even).
Scale the $m$ $174$-gadgets very slightly such that all $2m$ gadgets are
sandwiched between two horizontal lines symmetric about the $x$-axis,
with each of the two lines containing exactly one boundary point
from each gadget
(again this is possible because the number of points in each gadget is even).

We proceed to show that $D(P) \ge \hat n / 2 + 2m$.
By construction, the scaling in particular, 
any disk supported by two points from two non-consecutive gadgets
cannot be empty.
Let $\Q$ be any bubble set of disjoint empty disks for $P$.
Then each disk in $\Q$ is either a \emph{local disk}
supported by one or more points from the same gadget,
or a \emph{bridge disk} supported by two or more points
from two consecutive gadgets, that is, a $174$-gadget and a $340$-gadget.
We charge each bridge disk to the $174$-gadget that supports it.
By Theorem~\ref{thm:174} (note that scaling does not affect the result),
each $174$-gadget supports at least $174/2 + 1$
disks in $\Q$ (both local and bridge). 
It remains to show that each $340$-gadget supports at least $340/2 + 1$
local disks in $\Q$.

Since the horizontal distance $d$ between the center points of consecutive
gadgets is very large, every bridge disk of radius $\Omega(d)$
appears as an approximately straight line near either gadget that supports it.
Consequently each $340$-gadget can support at most two disjoint bridge disks
that appear as two approximately parallel lines.
If a $340$-gadget does not support any bridge disk,
then by Theorem~\ref{thm:174} it supports at least $340/2 + 1$ local disks,
as desired.
We next consider a $340$-gadget supporting
either one bridge disk or two bridge disks.
\begin{figure}[htb]
\centering\includegraphics[scale=0.95]{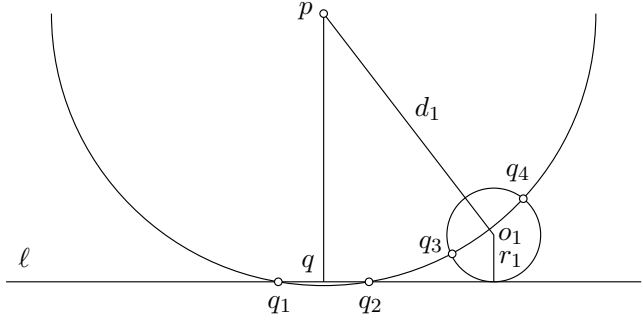}
\caption{A $340$-gadget supporting a bridge disk approximated by a line $\ell$
(the figure is not drawn in precise proportion;
the angles of the circular arcs between consecutive boundary points
are enlarged to show the details).}
\label{fig:ell}
\end{figure}

First consider a $340$-gadget supporting only one bridge disk.
We approximate the bridge disk by a straight line $\ell$ as justified earlier.
Refer to Figure~\ref{fig:ell}.
Without loss of generality,
we orient the construction such that $\ell$ is horizontal
and supports the gadgets from below.
We consider two cases analogous to the two cases in the proof of
Theorem~\ref{thm:174}: 
\begin{enumerate}

\item
$\ell$ is supported by only one boundary point, say $q_1$.
We claim that at least two of the four points $q_2,q_3,q_4,q_5$
and at least two of the four points $q_{339},q_{338},q_{337},q_{336}$
are singleton points.

\item
$\ell$ is supported by two consecutive boundary points, say $q_1$ and $q_2$.
We claim that at least two of the four points $q_3,q_4,q_5,q_6$
and at least two of the four points $q_{339},q_{338},q_{337},q_{336}$
are singleton points.

\end{enumerate}
As in the proof of Theorem~\ref{thm:174}, case~2 is the bottleneck case,
and by symmetry we only need to show that if $\ell$ is supported by
two consecutive boundary points $q_1$ and $q_2$,
then at least two of the four points $q_3,q_4,q_5,q_6$ are singleton points.
To show this by contradiction,
we use the same technique of two disks $Q_1$ and $Q_2$.
Suppose the contrary that at most one of the four points
$q_3,q_4,q_5,q_6$ is a singleton point.
Then we must have two disks $Q_1$ and $Q_2$ in one of the following three
subcases:
\begin{enumerate}

\item[i.]
$Q_1$ is supported by $q_3$ and $q_4$,
and $Q_2$ supported by $q_5$ and $q_6$;

\item[ii.]
$Q_1$ is supported by $q_3$ and $q_4$,
and $Q_2$ is supported by $q_6$ and $q_7$;

\item[iii.]
$Q_1$ is supported by $q_4$ and $q_5$,
and $Q_2$ is supported by $q_6$ and $q_7$.

\end{enumerate}

Put $k=340$, and $\theta = \pi/(k-1) = 0.5309\ldots^\circ$.
As in the proof of Theorem~\ref{thm:174},
let $r$ be the radius, and $o$ the center, of the disk $Q$.
For $i \in \{1,2\}$,
let $r_i$ be the radius, and $o_i$ the center, of the disk $Q_i$,
and let $d_i$ be the distance from $o_i$ to the center point $p$.
Then the two equations \eqref{eq:cosine1} and \eqref{eq:cosine3}
continue to hold.

In each subcase, we can assume that both $Q_1$ and $Q_2$ are tangent to $\ell$,
just as we assumed they are tangent to $Q$ in the proof of Theorem~\ref{thm:174}.
Let $q$ be the midpoint of $q_1$ and $q_2$.
Then $|pq| = \cos\theta$,
and we have the following equation analogous to~\eqref{eq:cosine2}:
\begin{equation}\label{eq:cosine2'}
d_i \cos\angle q p o_i + r_i = \cos\theta.
\end{equation}

We also have the following three sets of parameters,
analogous to the parameters in~\eqref{eq:parameters},
for the three subcases respectively:
\begin{align}
&\angle qpo_1 = 4\theta,\quad
\angle qpo_2 = \phantom{1}8\theta,\quad
\angle o_1 p o_2 = 4\theta,\label{eq:parameters1}\\
&\angle qpo_1 = 4\theta,\quad
\angle qpo_2 = 10\theta,\quad
\angle o_1 p o_2 = 6\theta,\label{eq:parameters2}\\
&\angle qpo_1 = 6\theta,\quad
\angle qpo_2 = 10\theta,\quad
\angle o_1 p o_2 = 4\theta.\label{eq:parameters3}
\end{align}
Plug the parameters in
\eqref{eq:parameters1}, \eqref{eq:parameters2}, \eqref{eq:parameters3},
respectively,
into the three equations
\eqref{eq:cosine1}, \eqref{eq:cosine2'}, \eqref{eq:cosine3}
(this is analogous to plugging \eqref{eq:parameters} into
\eqref{eq:cosine1}, \eqref{eq:cosine2}, \eqref{eq:cosine3}
in the proof of Theorem~\ref{thm:174}). 
One can now verify that
the following three sets of values in
\eqref{eq:values1}, \eqref{eq:values2}, \eqref{eq:values3},
respectively,
satisfy the equations.
\begin{equation}\label{eq:values1}
\begin{array}{cc}
d_1 = 0.9333\ldots,\quad
d_2 = 0.9854\ldots,\quad
r_1 = 0.0672\ldots,\quad
r_2 = 0.0172\ldots,\smallskip\\
|o_1 o_2| = 0.0631\ldots,\quad
r_1 + r_2 = 0.0845\ldots.
\end{array}
\end{equation}

\begin{equation}\label{eq:values2}
\begin{array}{cc}
d_1 = 0.9333\ldots,\quad
d_2 = 0.9919\ldots,\quad
r_1 = 0.0672\ldots,\quad
r_2 = 0.0122\ldots,\smallskip\\
|o_1 o_2| = 0.0794\ldots,\quad
r_1 + r_2 = 0.0795\ldots.
\end{array}
\end{equation}

\begin{equation}\label{eq:values3}
\begin{array}{cc}
d_1 = 0.9721\ldots,\quad
d_2 = 0.9919\ldots,\quad
r_1 = 0.0293\ldots,\quad
r_2 = 0.0122\ldots,\smallskip\\
|o_1 o_2| = 0.0414\ldots,\quad
r_1 + r_2 = 0.0415\ldots.
\end{array}
\end{equation}
Note that $|o_1 o_2| < r_1 + r_2$ for each of the three subcases.
Thus $Q_1$ and $Q_2$ intersect, giving the desired contradiction.

We have shown that at least $4$ of the $8$ boundary points near $\ell$
are singleton points.
Excluding the at most $2$ boundary points on $\ell$ and $4$ singleton points
near $\ell$, the remaining at least $(k-1) - 2 - 4 = k - 7$ boundary points
support at least $\lceil (k-7)/2 \rceil = k/2 - 3$ empty disks.
Adding back the $4$ empty disks supported by the $4$ singleton points,
the total number of local disks is at least $k/2 - 3 + 4 = k/2 + 1$.
That is, the $340$-gadget supports at least $340/2 + 1$ local disks.

Finally consider a $340$-gadget supporting two bridge disks.
Recall that the two bridge disks appear as two approximately parallel lines.
Thus the $8$ boundary points near one line are approximately opposite to
(and hence disjoint from) the $8$ boundary points near the other line.
And thus the same analysis for the two lines show that there are altogether
$8$ singleton points among the $16$ boundary points.
Excluding the at most $4$ boundary points on the two lines
and $8$ singleton points near the two lines,
the remaining at least $(k-1) - 4 - 8 = k - 13$ boundary points
support at least $\lceil (k-13)/2 \rceil = k/2 - 6$ empty disks.
Adding back the $8$ empty disks supported by the $8$ singleton points,
the total number of local disks is at least $k/2 - 6 + 8 = k/2 + 2$.
That is, the $340$-gadget supports at least $340/2 + 2$ local disks,
more than what we desired.

In summary, we have shown that
each $174$-gadget supports at least
$174/2 + 1$ (both local and bridge) disks in $\Q$,
and that
each $340$-gadget supports at least
$340/2 + 1$ local disks in $\Q$.
Summing up the number of disks over all $m$ $174$-gadgets and $m$ $340$-gadgets,
we have $D(P) \ge \hat n / 2 + 2m$.
This completes the proof of Theorem~\ref{thm:257}. 
\end{proof}

\section{Linear constructions} \label{sec:linear}

In Section~\ref{sec:prelim} we outline a preliminary (simpler)
construction that is easier to analyze. In Section~\ref{sec:refined}
we present a refinement which yields a better lower bound.

\subsection{Preliminary construction} \label{sec:prelim}

For every $n$, we construct a set $P$ of $n$ points, with
$D(P) \geq n/2 + n/966 - O(1)$.
We first describe a construction for every $n= C_1 j + C_2$,
$j=1,2\ldots$, for some suitable integers $C_1,C_2 \geq 1$.
We then extend the construction for every $n$. 

To achieve this bound we enforce that $\Omega(n)$ points of $P$
are singleton points.
Place the $n$ points on three parallel horizontal lines
$\ell_1,\ell_2,\ell_3$, as illustrated in Figure~\ref{f1}. 
The distance between $\ell_1$ and $\ell_3$ is $4$; the line in the
middle, $\ell_2$, is equidistant from $\ell_1$ and $\ell_3$. 
\begin{figure}[htb]
\centerline{\epsfxsize=6in \epsffile{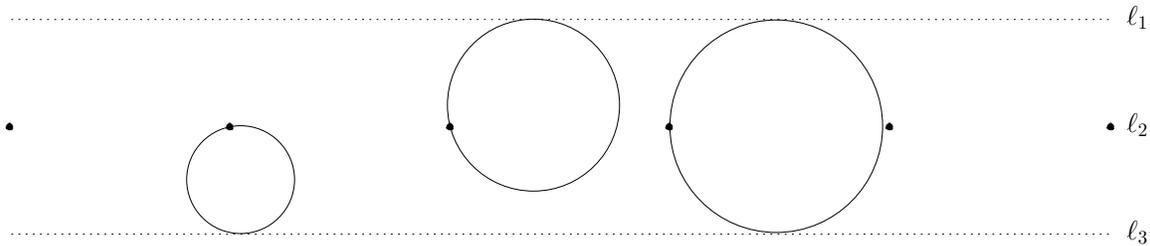}}
\caption{Construction and three bubbles: 
one supported by a point on $\ell_2$ and a point on $\ell_3$, 
one supported by a point on $\ell_2$ and two points on $\ell_1$, 
and one supported by a point on each of the three lines.} 
\label{f1}
\end{figure}

Let $0<\delta \leq 1/50$, so that $4/\delta$ is an integer; this
condition will be ensured by the choice of $\delta$.
Let $j \geq 1$ be a positive integer.
We first place the points on the three lines in a regular fashion.
Then shift right very slightly by the same distance $\eps$ those on
$\ell_3$, so that no four points of $P$ are co-circular on an empty circle.
Place $(4/\delta+1)j +1$ uniformly distributed points on $\ell_1$,
with any two consecutive points at distance $\delta$; proceed in the same
way for $\ell_3$.
Place $j+1$ uniformly distributed points on $\ell_2$,
labeled $p_1,\ldots,p_{j+1}$, 
with any two consecutive points at distance $4+\delta$. 
The leftmost points on the three lines are aligned vertically,
say at $x=0$. 
The small perturbation (shift) does not affect the points on 
$\ell_1$ and $\ell_2$.

We refer to $\ell_1$ and $\ell_3$ as the \emph{dense lines} 
and to the middle line $\ell_2$ as the \emph{sparse line}. 
The total number of points (on the three lines) is
\begin{equation} \label{E2}
n= \left( \frac{4}{\delta} + \frac{4}{\delta} + 3 \right) j +3
= \left( \frac{8}{\delta} + 3 \right) j +3. 
\end{equation} 

In an alternative view of this construction, the initial points 
(before perturbation) are placed on the
boundary and on the horizontal line through the center of the rectangle
$ R=[0,(4+\delta)j] \times [0,4]$. We refer to the $j-1$ points of $P$ in the
interior of $R$, \ie, on $\ell_2$, as {\em interior points}. 
As yet another view, the initial points are placed on the
boundaries of the $j$ axis-aligned rectangles 
$[(4+\delta)i, (4+\delta)(i+1)] \times [0,4]$, $i=0,\ldots,j-1$.

Let $\Q$ be a set of pairwise-disjoint bubbles (disks) supported by $P$.
Let $\Q'$ be the subset of disks that are either incident to some
interior point of $P$ or are incident to exactly three points of~$P$
but not to $p_1$ or $p_{j+1}$. 
Observe that the total number of disks in $\Q$ incident to $p_1$ or
$p_{j+1}$ is $O(1)$. 

First, observe that the radius of any empty disk incident to an interior
point and some other point is at least $1$. 
Second, observe also that if an empty disk whose center is in $R$ intersects
both $\ell_1$ and $\ell_3$, then its radius is at least $2$.
We refer to such disks (of the above two types) as \emph{large}, and
to any other disk as \emph{small}. 
Third, observe that the radius of any empty disk supported by an interior 
point is at most 
$$ \sqrt{16+\delta^2} \, / \, 2 < 2+\delta/2. $$
Similarly, the radius of any empty disk supported by 
a point in $\ell_1$ and a point in $\ell_3$ is also 
bounded from above by the same quantity. 
It follows that no empty disk can be incident to two interior points
(on $\ell_2$). 

The next two lemmas show that if an interior point $p \in P$ on the
sparse line supports a disk that is also supported by another point of $P$, 
then sufficiently many points on one of the two dense lines 
must be singleton points. Moreover these ``forced'' points can
be uniquely ``assigned'' to the corresponding disk incident to $p$. 
This suggests that in a minimum-size bubble set supported by $P$, 
each interior point must be a singleton point. A calculation making
this intuition precise is at the end of this subsection.

\begin{lemma}\label{L1}
Let $p_1,p_2,p_3,p_4$ be four consecutive points on the dense line $\ell_3$. 
Let $Q_{1,2}$ and $Q_{3,4}$ be two disks, the first supported by $p_1$
and $p_2$, and the second supported by $p_3$ and $p_4$. Denote by
$\ell'_3$ the horizontal line at distance $(1-\sqrt3/2)\delta$ above $\ell_3$. 
If both disks lie strictly below $\ell'_3$, then they overlap in their
interior (hence they are not disjoint). 
\end{lemma}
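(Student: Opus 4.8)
The plan is to reduce the statement to an elementary one-variable (after scaling, two-variable) inequality by setting up coordinates and translating the hypothesis ``lies strictly below $\ell'_3$'' into a quantitative bound on each disk. Take $\ell_3$ to be the $x$-axis and place the four consecutive points at $p_1=(0,0)$, $p_2=(\delta,0)$, $p_3=(2\delta,0)$, $p_4=(3\delta,0)$; the common horizontal shift by $\eps$ applied to all points of $\ell_3$ preserves their spacing and is therefore irrelevant here. Since $Q_{1,2}$ passes through $p_1$ and $p_2$, its center lies on the vertical bisector $x=\delta/2$, say at $(\delta/2,-h_1)$ with radius $r_1=\sqrt{(\delta/2)^2+h_1^2}$; likewise $Q_{3,4}$ has center $(5\delta/2,-h_2)$ and radius $r_2=\sqrt{(\delta/2)^2+h_2^2}$. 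By homogeneity I may rescale to $\delta=1$.

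The first key step is to show that the geometric hypothesis is equivalent to a clean threshold on the center depths. The topmost point of $Q_{1,2}$ has height $-h_1+\sqrt{h_1^2+1/4}$, which is strictly decreasing in $h_1>0$ (and a center on or above $\ell_3$ would push the top above $1/2>1-\sqrt3/2$, so such disks cannot lie below $\ell'_3$). Solving $-h_1+\sqrt{h_1^2+1/4}=1-\sqrt3/2$ gives exactly $h_1=\sqrt3/2$, at which point $r_1=1$. Hence $Q_{1,2}$ lies strictly below $\ell'_3$ if and only if $h_1>\sqrt3/2$ (equivalently $r_1>1$), and similarly for $h_2$. This is where the seemingly arbitrary constant $(1-\sqrt3/2)\delta$ earns its keep: at the threshold the two extremal disks have centers $(\tfrac12,-\tfrac{\sqrt3}{2})$ and $(\tfrac52,-\tfrac{\sqrt3}{2})$, both of radius $1$, so their centers are exactly $2=r_1+r_2$ apart and the disks are tangent, which is precisely the borderline of the lemma.

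The second step is the overlap calculation. The two centers are at distance $\sqrt{4+(h_1-h_2)^2}$, so the disks overlap in their interiors exactly when
\[
\sqrt{4+(h_1-h_2)^2}\;<\;\sqrt{h_1^2+\tfrac14}+\sqrt{h_2^2+\tfrac14}.
\]
Both sides are positive, so I square, simplify to $2\sqrt{(h_1^2+\tfrac14)(h_2^2+\tfrac14)}>\tfrac72-2h_1h_2$ (if the right side is negative we are already done), and square once more to arrive at the equivalent inequality $h_1^2+h_2^2+14\,h_1h_2>12$. Writing $a=h_1,\ b=h_2\ge \sqrt3/2$ gives $a^2\ge 3/4$, $b^2\ge 3/4$, and $ab\ge 3/4$, whence $a^2+b^2+14ab\ge \tfrac34+\tfrac34+\tfrac{42}{4}=12$, with equality only at $a=b=\sqrt3/2$. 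Since the hypothesis forces both depths strictly above $\sqrt3/2$, the inequality is strict and the interiors genuinely overlap.

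I expect the only real obstacle to be the first step: pinning down the exact equivalence ``below $\ell'_3\iff h>\sqrt3/2\iff r>\delta$'' and recognizing that the threshold radius is precisely $\delta$, which makes the extremal disks tangent and explains the choice of the line $\ell'_3$. Once this characterization is in hand, the remainder is routine two-step squaring together with the elementary inequality $a^2+b^2+14ab\ge 12$ on the quadrant $a,b\ge\sqrt3/2$, and the strictness follows because equality holds only at the corner excluded by the hypothesis.
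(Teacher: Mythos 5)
Your proof is correct, but it takes a genuinely different route from the paper's. The paper argues by reduction to an extremal configuration: it replaces $Q_{1,2}$ and $Q_{3,4}$ by the disks $\omega_{1,2}$ and $\omega_{3,4}$ through the same point pairs but tangent to $\ell'_3$, observes that the portions below $\ell_3$ of the original disks contain those of the $\omega$'s (circles through the same two points of $\ell_3$ have nested lower portions), computes via the circumradius formula $abc/(4S)$ that each $\omega$ has radius exactly $\delta$, and concludes that the two $\omega$'s are tangent --- so the strictly larger disks $Q_{1,2}$ and $Q_{3,4}$ must overlap. You instead parametrize both disks by their center depths $h_1,h_2$, establish the equivalence ``strictly below $\ell'_3$ $\iff h_i > \sqrt{3}\delta/2 \iff r_i > \delta$,'' and verify the overlap condition $|o_1o_2| < r_1+r_2$ directly by two squarings, reducing it to $h_1^2+h_2^2+14h_1h_2 > 12$ on the region $h_1,h_2 > \sqrt{3}/2$ (after scaling $\delta=1$). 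Both proofs hinge on the same extremal fact --- at the threshold the two disks have radius $\delta$ and are exactly tangent --- but the mechanisms differ: the paper's containment/monotonicity argument disposes of all non-extremal configurations at once and is shorter, while your algebraic verification avoids appealing to the nested-portions property of the pencil of circles through two points (which the paper dismisses as clear), treats the two depths independently rather than pushing both to the extremal position, and delivers strictness automatically, since equality in your final inequality occurs only at the corner $h_1=h_2=\sqrt{3}/2$ excluded by the hypothesis. Your case split before the second squaring (the inequality is automatic when $7/2-2h_1h_2<0$) correctly preserves the chain of equivalences.
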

\begin{proof}
Refer to Figure~\ref{f2}.
Let $\omega_{1,2}$ be the disk supported by $p_1$ and $p_2$ and
tangent to $\ell'_3$. Similarly, 
let $\omega_{3,4}$ be the disk supported by $p_3$ and $p_4$ and
tangent to $\ell'_3$. 
Denote by $\pi_3$ the closed half-plane below $\ell_3$. 
We clearly have that 
$\omega_{1,2} \cap \pi_3 \subset Q_{1,2} \cap \pi_3$
and $\omega_{3,4} \cap \pi_3 \subset Q_{3,4} \cap \pi_3$.
Therefore, it suffices to show that $\omega_{1,2}$ and $\omega_{3,4}$ 
are tangent to each other, since then, it follows that 
$Q_{1,2}$ and $Q_{3,4}$ are not pairwise-disjoint. 
\begin{figure}[htb]
\centerline{\epsfxsize=2.3in \epsffile{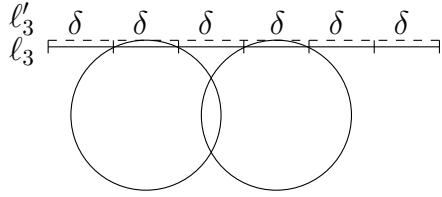}}
\caption{Proof of Lemma~\ref{L1}.} 
\label{f2}
\end{figure}

The two disks $\omega_{1,2}$ and $\omega_{3,4}$ are congruent,
and since they are both tangent to $\ell'_3$, their centers lie on 
the same horizontal line. It suffices to show that their radius $r$
is at least $\delta$.
It is well-known that the radius of a circle circumscribed to a
triangle with side lengths $a$,$b$,$c$ is equal to $abc/(4S)$, where $S$ stands
for the triangle area. It follows that
$$ r=\frac{\delta \left[ \left( \frac{\delta}{2} \right)^2 
+ \left((1-\frac{\sqrt{3}}{2})\delta \right)^2 \right]}
{(2-\sqrt3)\delta^2} = \delta. $$
Hence the two disks $\omega_{1,2}$ and $\omega_{3,4}$ are tangent to
each other, as desired, and this completes the proof.
\end{proof}

\begin{lemma}\label{L2}
Let $\ell \in \{\ell_1,\ell_3\}$. 
Let $Q \in \Q'$ be a large empty disk centered at $o$ that intersects $\ell$.
Then there exist two singleton points of $P$ on $\ell$ at horizontal
distance at most $7\delta/2$ from $o$, and that are not incident to $Q$. 
As such, these points can be uniquely assigned to $Q$ (and to no other
disk in $\Q$).  
\end{lemma}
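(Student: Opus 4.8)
The plan is to work in a narrow vertical strip around the foot $\xi=\xi(Q,\ell)$, exploit that the large radius of $Q$ makes $\partial Q$ almost horizontal there, and then let Lemma~\ref{L1} do the combinatorial work. By the $\ell_1/\ell_3$ symmetry I may take $\ell=\ell_3$ and put $\ell_3$ on the $x$-axis. First I would locate $Q$: being large it has radius $r\ge 1$, and being empty its open chord $Q\cap\ell_3$ contains no point of $P$, so (consecutive points of $\ell_3$ being at distance $\delta$) the half-chord $w$ satisfies $w<\delta/2$; moreover the center $o$ must lie \emph{above} $\ell_3$, since a center below $\ell_3$ would force $Q$ to engulf many grid points and violate emptiness. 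Thus $Q$ presses down onto $\ell_3$ near $\xi$. A routine estimate using $r\ge 1$ and $\delta\le 1/50$ then shows that over the window $|x-x_o|\le 7\delta/2$ the lower boundary of $Q$ rises only by $O(\delta^2)$ and in particular stays strictly below the line $\ell'_3$ of height $(1-\sqrt3/2)\delta$.

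Next I would reduce the local picture to a one-dimensional matching. Any empty disk supported by two points of $\ell_3$ in the window has them \emph{consecutive}; the two grid points flanking the chord cannot support a common disk, since its apex, lying near $\xi$, would fall inside $Q$; and a window point can support no large disk other than $Q$, because a second large disk would likewise hug $\ell_3$ over an overlapping window and hence meet $Q$. So, apart from $Q$ itself, every disk touching a window point is small and is either a singleton disk or a \emph{pairing disk} on two consecutive points of $\ell_3$. I then claim each pairing disk lies strictly below $\ell'_3$: its highest point sits directly above its center at an $x$-coordinate inside the window, and were it at height $\ge$ the height of $\ell'_3$ it would exceed the lower boundary of $Q$ there and thus lie in $Q$, contradicting disjointness.

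With all pairing disks strictly below $\ell'_3$, Lemma~\ref{L1}, together with the identical computation for two pairs sharing a common point, shows that two pairing disks on overlapping or adjacent consecutive pairs cannot be disjoint; hence among any three consecutive window points on one side of the chord at most one pairing disk is supported. As three such points not incident to $Q$ sit within horizontal distance $w+3\delta<7\delta/2$ of $\xi$ on each side, at least one of them supports no pairing disk, so (supporting at least one disk) it is a singleton point not incident to $Q$. Carrying this out on the left and on the right of the chord yields the two required singletons. For uniqueness of the assignment I would argue that a point charged to two disks $Q,Q''\in\Q'$ would force $|\xi(Q,\ell)-\xi(Q'',\ell)|<7\delta$, whence $Q$ and $Q''$ both hug $\ell_3$ over overlapping windows and intersect, impossible for disjoint disks. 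The main obstacle is the reduction of the second paragraph: rigorously excluding that a disk on window points escapes upward past $\ell'_3$ without either meeting $Q$ or swallowing a neighbouring grid point, so that the configuration genuinely collapses to the flat one-dimensional picture in which Lemma~\ref{L1} applies; once that is secured, the surviving distance and counting estimates are routine.
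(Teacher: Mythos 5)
Your proposal is correct in substance and follows essentially the same route as the paper's proof: the sagitta estimate (the lower boundary of $Q$ stays within $O(\delta^2)<(1-\sqrt{3}/2)\delta$ of $\ell$ over a window around $\xi(Q,\ell)$), the resulting confinement of every pairing disk on nearby consecutive points below $\ell'_3$, an application of Lemma~\ref{L1} to extract a singleton point among the first three non-incident points on each side of $\xi(Q,\ell)$, and uniqueness via the fact that two disjoint large disks pressing $\ell$ from the same side must have far-apart feet---indeed you make explicit several steps (exclusion of a second large disk through a window point, the shared-point case) that the paper leaves implicit. One intermediate sentence is misstated---``among any three consecutive window points at most one pairing disk is supported'' is false, since disks on $(q_0,q_1)$ and $(q_3,q_4)$ are neither overlapping nor adjacent and can coexist below $\ell'_3$---but the conclusion you actually need, namely that $q_1,q_2,q_3$ cannot all support pairing disks, does follow by a short case analysis from the overlapping/adjacent conflicts you establish, so this is a harmless slip rather than a gap.
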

\begin{proof}
Assume for simplicity that $\ell=\ell_3$, and put $u=\xi(Q,\ell_3)$.
Then $u$ is contained in the interval between two consecutive
points, say $q$ and $q'$.  Refer to Figure~\ref{f3} (left).
For simplicity suppose that $u$ is contained in the left 
half-interval of $qq'$ (the other case is symmetric). 
\begin{figure}[htb]
\centerline{\epsfxsize=6.2in \epsffile{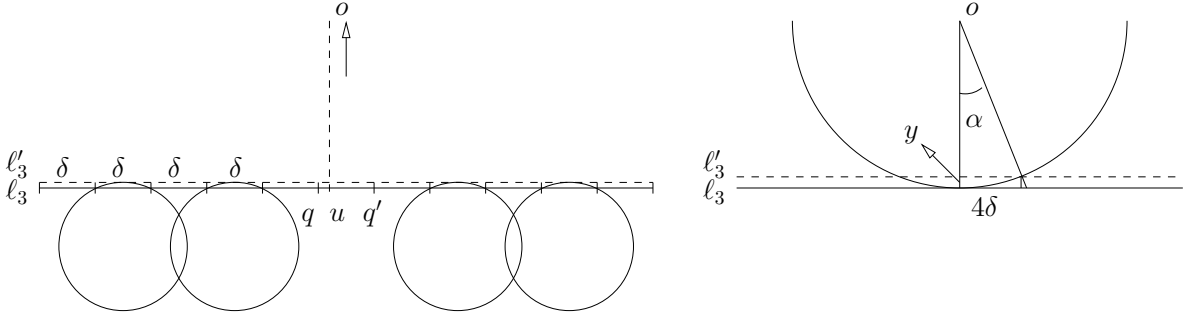}}
\caption{Proof of Lemma~\ref{L2}.} 
\label{f3}
\end{figure}

If $u$ is the midpoint of $qq'$, since $Q$ is empty, $Q$ is either
incident to both $q$ and $q'$ or to neither of them. 
If $u$ is not the midpoint of $qq'$, then $Q$ can be only incident to
$q$, but not to $q'$, if at all.

Let $q_i$, $i=1,\ldots,4$, be the
first four points of $P \cap \ell_3$ right of $u$ that are not
incident to $Q$. Observe that the farthest is at distance at most
$9\delta/2$ from $u$, hence the $x$-coordinate of the center of the
farthest disk incident to some pair $q_i,q_{i+1}$, $i=1,2,3$, 
is at most $4\delta$.
Let $r$ denote the radius of $Q$; obviously, $r \geq 1$. 
Let $y$ be the vertical distance between the midpoint of
$q_3 q_4$ and $Q$. 
It suffices to show that $y \leq 0.1337 \delta$, since then,
$y \leq 0.1337 \delta < (1 - \sqrt3/2)\delta$. 
Recall that $\delta \leq 1/50$, so we have 
\begin{align*}
y &\leq r -\sqrt{r^2 -16\delta^2} =
\frac{16\delta^2}{r+ \sqrt{r^2 -16 \delta^2}} 
\leq \frac{16\delta^2}{1+ \sqrt{1-16 \delta^2}} \\
&\leq \frac{16\delta^2}{1.997} \leq 8.02 \delta^2.
\end{align*}

Set now $\delta=1/60$, which ensures that
$ y \leq 8.02 \delta^2 \leq 0.1337 \delta$. 
Moreover, $4/\delta=240$ is an integer multiple of $\delta$, as desired.

It remains to show the existence of two singleton points 
in the vicinity of $u$. We show that there is at least one 
on each side of $u$. According to our previous calculation, 
any disk incident to some pair $q_i,q_{i+1}$, $i=1,2,3$, 
must lie below the line $\ell'_3$.
By Lemma~\ref{L1}, it follows that at least one of the points 
$q_i$, $i=1,\ldots,4$, is a singleton point. 
Note that since only consecutive points can be ``matched''
on an empty small circle, this implies that at least one of the points 
$q_i$, $i=1,\ldots,3$, is a singleton point. 
The farthest of these three points is at distance at most $7\delta/2$ from $u$.
Similarly, there is a singleton point left of $u$ 
at distance at most $7\delta/2$ from $u$.
Observe that the choice of these points may depend on the number of
points on $\ell_3$ (zero, one or two) incident to~$Q$. 
\end{proof}

Recall that by construction, each disk in $\Q$ is incident to at most
three points in $P$. Moreover, no disk in $\Q$ is incident to two
interior points (on $\ell_2$). 

\begin{lemma}\label{L3}
Let $Q \in \Q'$ be an empty disk. Then there exists a subset $\Q_1
\subset \Q$ of disks such that $Q \in \Q_1$ and $\Q_1$ is supported by
exactly $m$ points of $P$, where $|\Q_1| \geq (m+1)/2$. 
\end{lemma}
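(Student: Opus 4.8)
The plan is to take for $\Q_1$ the disk $Q$ together with all the singleton disks that $Q$ forces through Lemma~\ref{L2}, and to reduce the whole statement to a single counting inequality. Let $k$ be the number of points of $P$ supporting $Q$; since each disk in $\Q$ is incident to at most three points, $k\in\{1,2,3\}$. Write $s$ for the number of singleton points that Lemma~\ref{L2} assigns to $Q$, and let $\Q_1$ consist of $Q$ together with the $s$ singleton disks carried by those points. Since Lemma~\ref{L2} guarantees that each assigned singleton point is not incident to $Q$, and since a singleton disk is incident to exactly one point, the points supporting $\Q_1$ are the $k$ supports of $Q$ together with the $s$ singleton points, all distinct; hence $m=k+s$ and $|\Q_1|=1+s$. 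The goal $|\Q_1|\ge(m+1)/2$ then rearranges to $s\ge k-1$, so it suffices to force at least $k-1$ singletons in each case.

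I would verify $s\ge k-1$ by cases on $k$. If $k=1$, then $Q\in\Q'$ can only be a singleton disk incident to an interior point, and $\Q_1=\{Q\}$, $m=1$ already gives $|\Q_1|=1=(m+1)/2$ with $s=0$. If $k=2$, then $Q\in\Q'$ must be incident to an interior point (the three-point clause in the definition of $\Q'$ does not apply), and its second support lies on a dense line, since no empty disk through an interior point can also meet a second point of $\ell_2$; thus $Q$ is large, it meets that dense line, and Lemma~\ref{L2} returns $s=2\ge1$. If $k=3$ and $Q$ is incident to an interior point, then its two other supports lie on the dense lines, so $Q$ is again large and meets at least one dense line, and Lemma~\ref{L2} yields $s\ge2$.

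The remaining case, $k=3$ with no interior support, is where the geometry must be pinned down, and I expect it to be the main obstacle. Here $Q\in\Q'$ through the three-point clause, and I would first show that its three supports lie two on one dense line and one on the other: they cannot all share a dense line (three collinear points are not concyclic), and the two that do share a line must be consecutive, for otherwise another point of that line would lie inside the empty disk $Q$. Consequently $Q$ meets both $\ell_1$ and $\ell_3$, so its radius is at least $2$, and its center (which lies on the nearly vertical perpendicular bisector of the two consecutive supports) has $x$-coordinate inside $R$; hence $Q$ is large of the second type, and applying Lemma~\ref{L2} to each of $\ell_1$ and $\ell_3$ gives $s\ge4\ge2$. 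The delicate point throughout is exactly this bookkeeping: confirming in every sub-case that $Q$ is a \emph{large} disk meeting the relevant dense line(s) so that Lemma~\ref{L2} applies, and that the singletons it returns are genuinely new points (distinct from the supports of $Q$, and on different lines in the two-line case) so that $m=k+s$ holds exactly. With $s\ge k-1$ established in all cases, the reduction of the first paragraph finishes the proof.
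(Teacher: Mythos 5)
Your proposal is correct and takes essentially the same route as the paper's proof: $\Q_1$ is the disk $Q$ together with the singleton disks forced by Lemma~\ref{L2}, with the cases organized by the number of supports of $Q$ and whether one of them is an interior point. Your reduction of the target inequality to $s \ge k-1$ is just a cleaner packaging of the paper's case-by-case counts (its checks with $m=1,4,5,7,8$), and your extra bookkeeping in the three-support, no-interior-point case (consecutiveness of the same-line supports, largeness of $Q$) only makes explicit details that the paper leaves implicit.
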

\begin{proof}
Assume that $Q \in \Q'$ is incident to (exactly) $i$ points of $P$.
By the previous observation, we know that $1 \leq i \leq 3$.
We distinguish two cases: 

\smallskip
\emph{Case 1.} $Q$ is incident to at least one interior point $p$ (on
$\ell_2$). 

If $i=1$, then set $\Q_1 =\{Q\}$, and the claimed inequality holds 
with $m=1$. 

If $i=2$, then $Q$ is also incident to a point on $q \in \ell_3$
(the case  $q \in \ell_1$ is symmetric). Then by Lemma~\ref{L2}
there exist two points $q_1,q_2$ in the neighborhood of $q$
(left and right of $q$), each incident to exactly one disk in $\Q$,
say $Q_1$ and $Q_2$. 
Then set $\Q_1 =\{Q,Q_1,Q_2\}$, and the claimed inequality holds with
$m=4$: $|\Q_1|=3 \geq (4+1)/2$. 

If $i=3$, assume first that $Q$ is also incident to two points $q,q' \in \ell_3$
(the case with two points on $\ell_1$ is symmetric). Then by Lemma~\ref{L2}
there exist two points (left of $q$ and right of $q'$),  
each incident to exactly one disk in $\Q$, say $Q_1$ and $Q_2$, and
such that $Q_1$ and $Q_2$ are each incident to exactly one point in $P$.
Then set $\Q_1 =\{Q,Q_1,Q_2\}$, and the claimed inequality holds with
$m=5$: $|\Q_1|=3 \geq (5+1)/2$. 
Second, assume that $Q$ is also incident to a point $q \in \ell_1$
and a point $q' \in \ell_3$. Again by Lemma~\ref{L2},
there exist two points on $\ell_1$ (left and right of $q$) 
and two points on $\ell_3$ (left and right of $q'$), 
each incident to exactly one disk in $\Q$. Let $Q_1,Q_2,Q_3,Q_4$ be
these four disks. Then set $\Q_1 =\{Q,Q_1,Q_2,Q_3,Q_4\}$, and the claimed
inequality holds with $m=7$: $|\Q_1|=5 \geq (7+1)/2$. 

\smallskip
\emph{Case 2.} $Q$ is not incident to any interior point (on $\ell_2$). 
Then, by the definition of $\Q'$, $Q$ is not incident to any point on $\ell_2$. 
It follows that $Q$ is incident to three points on $\ell_1 \cup \ell_3$. 
By Lemma~\ref{L2}, there exist four other points, two on
$\ell_1$ and two on $\ell_3$, each incident to exactly one disk in
$\Q$. Let $Q_1,Q_2,Q_3,Q_4$ be these four disks. Then set 
$\Q_1 =\{Q,Q_1,Q_2,Q_3,Q_4\}$, and the claimed 
inequality holds with $m=8$: $|\Q_1|=5 \geq (8+1)/2$. 

This completes our case analysis and the proof of the lemma.
\end{proof}

We now analyze the construction.
We have set $\delta=1/60$, thus $4/\delta=240$. 
Let $\Q_2 \subset \Q$ be the subset of disks constructed
from $\Q'$ by taking the union of all disks in $\Q_1$ 
from all cases in the analysis in the proof of Lemma~\ref{L3}.
Clearly $\Q' \subset \cup \Q_1 = \Q_2 \subset \Q$. 
Put $d=|\Q_2|$. Let $T \subset P$ be the set of points incident to disks
in $\Q_2$; write $t=|T|$. Observe that each interior point yields 
one inequality of the form $|\Q_1| \geq (m+1)/2$,
so there are at least $j-1$ such inequalities. 
The corresponding sets $\Q_1$ are pairwise disjoint by construction, thus
by adding up all these inequalities yields
$$ |\Q_2| \geq \frac{t+j-1}{2}. $$
Apart from $O(1)$ exceptions, each disk in $\Q \setminus \Q_2$ is
incident to at most two points in $P$. Consequently,
\begin{equation} \label{E18}
|\Q| = |\Q_2| + |\Q \setminus \Q_2| \geq \frac{t+j-1}{2} +
\frac{n-t}{2} -O(1) = \frac{n+j}{2} -O(1). 
\end{equation}

By substituting the value of $j$ resulting
from~\eqref{E2}, it follows that the number of disjoint empty disks in
any disjoint bubble set for $P$ is at least 
\begin{equation} \label{E19}
\frac{n}{2} + \frac{j}{2} -O(1) =
\frac{n}{2} + \frac{n}{16/\delta+6} -O(1) 
= \frac{n}{2} + \frac{n}{966} -O(1).
\end{equation}

Write $C_1=8/\delta+3$. 
To extend the construction for every $n$, we add a small cluster of 
at most $C_1-1$ collinear points on $\ell_3$, far away from the ``main''
construction above. Observe that there can be at most two disks
incident to both the main part and the additional cluster. 
Recall that each disk is incident to at most three points (this
condition can be maintained), and conclude that the previous analysis
yields the same bound. 
\qed

\subsection{A refined approach} \label{sec:refined}

We modify the previous construction and refine its analysis. 
These changes are motivated as follows.  

\begin{enumerate}

\item The lower bound can be raised by ``replicating'' the
construction vertically: use $k+1$ dense lines separated by 
$k$ sparse lines in between. The analysis however becomes more
involved. Our preliminary construction has $k=1$. 

\item In our previous analysis we only require that small disks
  incident to two consecutive points on $\ell_3$ and near a large disk
  intersecting $\ell_3$ lie below $\ell'_3$. Observe that such a
  triplet of disjoint disks can be enlarged until the three disks
  become pairwise tangent. Thus $\delta$ can be increased and this
  brings another improvement in the lower bound on $D(n)$. 

\end{enumerate}

\begin{lemma}\label{L4}
Let $x \leq 1/10$. 
Let $C$ be a disk of unit radius centered at $(0,1)$. 
Let $C_1$ be a disk incident to the points $(3x/2,0)$ and $(5x/2,0)$.
Let $C_2$ be a disk incident to the points $(7x/2,0)$ and $(9x/2,0)$.
If the three disks are pairwise tangent then 
$x=2 \sqrt{\lambda} =0.03486\ldots$, where $ \lambda = \frac{4}{945} (7-4\sqrt{3}) $
is the smaller solution of the quadratic equation
$ 893025 \lambda^2-52920 \lambda+16 = 0 $.
\end{lemma}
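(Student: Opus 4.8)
The plan is to turn the three geometric tangency conditions into one polynomial equation in a single variable and then solve it. The natural variable is $\lambda = (x/2)^2 = x^2/4$, since the squared half-distance between each pair of consecutive supporting points is what enters the radii. First I would fix coordinates: $C$ has center $(0,1)$ and radius $1$. Because $C_1$ passes through $(3x/2,0)$ and $(5x/2,0)$, its center lies on their perpendicular bisector and hence has abscissa $2x$; writing it as $(2x,a)$, its radius satisfies $r_1^2 = (x/2)^2 + a^2 = \lambda + a^2$. Likewise $C_2$ has center $(4x,b)$ with $r_2^2 = \lambda + b^2$. A check of the intended configuration (both small disks hang below the $x$-axis and touch the large disk from outside, so their centers satisfy $a,b<0$) shows all three tangencies are external, giving
\[ (2x)^2 + (a-1)^2 = (1+r_1)^2, \quad (4x)^2 + (b-1)^2 = (1+r_2)^2, \quad (2x)^2 + (b-a)^2 = (r_1+r_2)^2. \]

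Next I would eliminate the radicals. Expanding the first equation and using $r_1^2 = \lambda + a^2$ together with $(2x)^2 = 16\lambda$ yields $2r_1 = 15\lambda - 2a$; squaring and cancelling $a^2$ leaves a relation \emph{linear} in $\lambda$, namely $a = (225\lambda - 4)/60$. The second equation gives $b = (3969\lambda - 4)/252$ in the same way. (Here $15 = (2x)^2/\lambda - 1$ and $63 = (4x)^2/\lambda - 1$, which explains the coefficients $15^2,\,4\cdot 15$ and $63^2,\,4\cdot 63$.) For the third equation, expanding and substituting the two radius identities reduces it to $7\lambda - ab = r_1 r_2$, and squaring once more gives the clean relation $48\lambda - 14ab = a^2 + b^2$.

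Finally I would substitute the linear expressions for $a$ and $b$ into $48\lambda - 14ab = a^2 + b^2$, clear denominators, and collect terms. Since $a$ and $b$ are linear in $\lambda$, every term is at most quadratic in $\lambda$, so the outcome is a quadratic; a large common factor ($1936 = 44^2$) cancels, leaving exactly $893025\lambda^2 - 52920\lambda + 16 = 0$. Taking the smaller root (the larger root, $\lambda = \tfrac{4}{945}(7+4\sqrt3)$, gives a spacing $x \approx 0.49$ comparable to the radius of $C$, so it is incompatible with the near-origin nested configuration) yields $\lambda = \frac{4}{945}(7 - 4\sqrt3)$, whence $x = 2\sqrt\lambda = 0.03486\ldots$.

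The main obstacle is bookkeeping rather than conceptual difficulty: one must correctly certify that all three tangencies are external, so that the sign choices forced by squaring, in particular $15\lambda - 2a > 0$, $63\lambda - 2b > 0$ and $7\lambda - ab > 0$, are the valid branches, and then carry the elimination through without arithmetic error so that the spurious factor cancels and precisely the stated quadratic emerges. Verifying that the smaller root is the geometrically relevant one then completes the argument.
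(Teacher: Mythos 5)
Your proposal is correct, and the algebra checks out: with $a=(225\lambda-4)/60$, $b=(3969\lambda-4)/252$, and the relation $48\lambda-14ab=a^2+b^2$, clearing the common denominator $1260$ and cancelling the factor $1936$ yields exactly $893025\lambda^2-52920\lambda+16=0$, and the larger root forces $x\approx 0.49>1/10$, which the hypothesis excludes. Your setup (centers at abscissae $2x$ and $4x$, incidence folded into the radius formulas, three external-tangency equations, reduction to a quadratic in $\lambda=x^2/4$) is identical to the paper's; the genuine difference is in the elimination. The paper keeps the radii $r_1,r_2$ as unknowns, parametrizes them as $r_i=z/\cos(\cdot)$ with $z=x/2$, and uses half-angle tangent substitutions to rationalize the first two tangency equations, obtaining $s=(2-15z)/(2+15z)$ and $t=(2-63z)/(2+63z)$ before substituting into the third; you instead eliminate $r_1,r_2$ by a single squaring and exploit the fact that each resulting relation is \emph{linear} in $\lambda$, which is shorter and avoids trigonometry altogether. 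For root selection, the paper uses the constraint $z<2/63$ (forced by positivity of $t$), while you invoke the hypothesis $x\le 1/10$ directly; both are valid. One caveat you share with the paper: both arguments posit the intended configuration (both small disks centered below the $x$-axis, all tangencies external) rather than derive it; since the lemma asserts only the forward implication --- tangency determines $x$ --- and squaring preserves implications, this creates no gap in either treatment.
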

\begin{proof}
Let $C_1$ be centered at $(2x,-y_1)$, and $C_2$ be centered at
$(4x,-y_2)$, where $0<y_2<y_1$. 
Denote by $r_i$ the radius of $C_i$, $i=1,2$.
We have five unknowns, $x,r_1,r_2,y_1,y_2$, and five connecting equations: 
two expressing incidences and three expressing tangencies. Put $x=2z$.
\begin{align*} 
z^2+ y_1^2 &= r_1^2 \\
z^2+ y_2^2 &= r_2^2 \\
16 z^2+ (1+y_1)^2 &= (1+r_1)^2 \\
64 z^2+ (1+y_2)^2 &= (1+r_2)^2 \\
16 z^2+ (y_1-y_2)^2 &= (r_1+r_2)^2 
\end{align*}

After eliminating $y_1$ and $y_2$ the system becomes
\begin{align*} 
16 z^2+ \left(1+\sqrt{r_1^2- z^2}\right)^2 &= (1+r_1)^2 \\
64 z^2+ \left(1+\sqrt{r_2^2- z^2}\right)^2 &= (1+r_2)^2 \\
16 z^2 + \left( \sqrt{r_1^2- z^2} - \sqrt{r_2^2- z^2} \right)^2&= (r_1+r_2)^2 
\end{align*}

Recall that $z \leq r_1,r_2$, and make the substitutions
$$ r_1= \frac{z}{\cos \alpha}, \ \ r_2= \frac{z}{\cos \beta},
\textup{\ where \ } \alpha,\beta \in (0,\pi/2). $$
The above system can be rewritten as
\begin{align*} 
16 z^2+ (1+ z \tan \alpha)^2 &= \left(1+ \frac{z}{\cos \alpha}\right)^2. \\
64 z^2+ (1+ z \tan \beta)^2 &= \left(1+ \frac{z}{\cos \beta}\right)^2. \\
16 z^2+ (z  \tan \alpha - z \tan \beta)^2 &= 
z^2 \left( \frac{1}{\cos \alpha} + \frac{1}{\cos \beta} \right)^2. 
\end{align*}

Express $\cos \alpha$ and $\tan \alpha$ as functions of $\tan (\alpha/2)$:
\begin{equation*} 
\cos \alpha= \frac{1-s^2}{1+s^2}, \ \textup{and} \ 
\tan \alpha= \frac{2s}{1-s^2}, \ \textup{where} \ 
s=\tan \frac{\alpha}{2}. 
\end{equation*} 

Similarly, express $\cos \beta$ and $\tan \beta$ as functions of $\tan (\beta/2)$:
\begin{equation*} 
\cos \beta= \frac{1-t^2}{1+t^2}, \ \textup{and} \ 
\tan \beta= \frac{2t}{1-t^2}, \ \textup{where} \ 
t=\tan \frac{\beta}{2}. 
\end{equation*} 

Now the first equation of our system can be rewritten as
\begin{equation} \label{E4}
16 z^2+ \left(1+ z \frac{2s}{1-s^2} \right)^2 =
\left(1 + z \frac{1+s^2}{1-s^2} \right)^2.
\end{equation} 
By rearranging terms, \eqref{E4} is equivalent to
\begin{equation} \label{E5}
2(1-s^2)(1-s)^2=15z (1-s^2)^2.
\end{equation} 
The solution $s=1$ is infeasible, thus \eqref{E4} yields
\begin{equation} \label{E6}
s=\frac{2-15z}{2+15z}. 
\end{equation} 

Similarly, the second equation of our system can be rewritten as
\begin{equation} \label{E7}
64 z^2+ \left(1+ z \frac{2t}{1-t^2} \right)^2 =
\left(1 + z \frac{1+t^2}{1-t^2} \right)^2.
\end{equation} 
By rearranging terms, \eqref{E7} is equivalent to
\begin{equation} \label{E8}
2(1-t^2)(1-t)^2=63z (1-t^2)^2.
\end{equation} 
The solution $t=1$ is infeasible, thus \eqref{E8} yields
\begin{equation} \label{E9}
t=\frac{2-63z}{2+63z}. 
\end{equation} 

Recall that $s,t >0$, thus~\eqref{E6} and~\eqref{E9} require 
$z < 2/15$ and $z < 2/63$; overall, $z < 2/63$.
We can now express all trigonometric functions occurring in the third
equation of our system as functions of $z$. 
\begin{align*} 
\tan \alpha &= \frac{2s}{1-s^2}= \frac{4-225z^2}{60z}, \\
\tan \beta &= \frac{2t}{1-t^2}= \frac{4-3969z^2}{252z}, \\
\cos \alpha &= \frac{1-s^2}{1+s^2}= \frac{60z}{4+225z^2}, \\
\cos \beta &= \frac{1-t^2}{1+t^2}= \frac{252z}{4+3969 z^2}.
\end{align*}

Finally we substitute this in the third equation of the system 
$$ 16 + (\tan \alpha - \tan \beta)^2 = 
\left( \frac{1}{\cos \alpha} + \frac{1}{\cos \beta} \right)^2. $$
and obtain an equation in $z$ only:
\begin{equation} \label{E10}
16+ \left(\frac{4-225z^2}{60z} - \frac{4-3969z^2}{252z} \right)^2 =
\left(\frac{4+225z^2}{60z} + \frac{4+3969z^2}{252z} \right)^2,
\end{equation} 
or equivalently,
\begin{equation} \label{E11}
16 \cdot 1260^2 z^2 + (64+15120 z^2)^2 = (104 + 24570 z^2)^2.
\end{equation} 

Make the substitution $\lambda=z^2$ and (after simplifying by $42$) the
quartic equation in $z$~\eqref{E11} becomes a quadratic equation in $\lambda$:
\begin{equation} \label{E12}
893025 \lambda^2 -52920 \lambda +16=0.
\end{equation} 
Its solutions are 
$$ \lambda_{1,2} =\frac{4}{945} \, (7 \pm 4 \sqrt{3}). $$
Correspondingly, we have 
$$ z_{1,2}= \sqrt{ \frac{4}{945} \, (7 \pm 4 \sqrt{3})}. $$
The larger solution $z_1$ is infeasible, as it contradicts 
our assumption $z<2/63$. Finally 
$$ x=2z_2=2 \sqrt{ \frac{4}{945} \, (7 - 4 \sqrt{3})}
=0.03486\ldots. 
\qedhere $$
\end{proof}

\paragraph{Refined construction.}
Set $\delta = 1/29 < 2\sqrt{\lambda} =0.03486\ldots$, 
where $ \lambda = \frac{4}{945} (7-4\sqrt{3}) $.
The points on $\ell_1$ and $\ell_3$ are (initially) placed as before,
at multiples of $\delta$. 
The points on $\ell_2$ are (initially) placed as before, at multiples
of $4+\delta$. 
We now replicate the above construction vertically: use $k+1$ dense
lines separated by $k$ sparse lines in between. In total there are
$2k+1$ lines, labeled $\ell_1,\ldots, \ell_{2k+1}$. Write
$\L=\{\ell_1,\ldots, \ell_{2k+1}\}$. 
Finally shift right the points on every other odd-numbered line
$\ell_3,\ell_7,\ell_{11},\ldots$ by a small $\eps$ so that no four
points of $P$ are co-circular on an empty circle. 
Our preliminary construction has $k=1$. The total number of points on
the $2k+1$ lines is 
\begin{equation} \label{E17}
n= ((k+1)(4/\delta+1) + k) j + (2k+1)
= jk (4/\delta+2) + (4/\delta+1) j+ (2k+1).
\end{equation} 

For a fixed dense line $\ell \in \L$, we commonly denote the points
in $P$ on $\ell$ by $q_i$, where $i \in \NN$. 
Suppose a large disk intersects a dense line;
since almost the entire disk lies either above or below the line,
we say that the disk intersects $\ell$ \emph{from above} or 
\emph{from below}.

\begin{observation} \label{O1}
Let $\ell$ be one of the dense horizontal 
lines, and $I \subset \ell$ be an interval of length $|I|=8\delta$. 
Let $\{Q_1,\ldots,Q_h\}$ be large disks in $\Q$ intersecting $\ell$, so
that $\xi(Q_i,\ell) \in I$, for each $i=1,\ldots,h$. Then $h \leq 2$. 
Moreover, $Q_1$ and $Q_2$ intersect $\ell$ from different sides, one
from above and one from below. 
\end{observation}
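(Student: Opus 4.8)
The plan is to reduce the whole statement to a single geometric fact: any two large disks of $\Q$ that intersect $\ell$ from the \emph{same} side, and whose feet $\xi(Q,\ell)$ both lie in an interval of length $8\delta$, must overlap. Since the disks of $\Q$ are pairwise disjoint, this immediately forces at most one such disk from above and at most one from below, giving $h\le 2$ and the ``opposite sides'' conclusion when $h=2$. Before the overlap computation I would first pin down the local geometry that also justifies the phrase ``intersects $\ell$ from above or from below.'' Let $Q$ be large with radius $r\ge 1$ and center $o$, and suppose $Q$ intersects $\ell$. The open chord $Q\cap\ell$ contains no point of $P$ (since $Q$ is empty), and consecutive points on a dense line are at distance $\delta$; hence an open interval free of such points has length at most $\delta$, so the chord has half-length $w\le\delta/2$. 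The midpoint of this chord is exactly $\xi(Q,\ell)$, because the chord is symmetric about the vertical diameter of $Q$. Consequently the center $o$ lies on one definite side of $\ell$ at height $h=\sqrt{r^2-w^2}$, with
\[
r-\frac{\delta^2}{4}\ \le\ \sqrt{r^2-\tfrac{\delta^2}{4}}\ \le\ h\ \le\ r,
\]
so $o$ is essentially at distance $r$ from $\ell$ and the disk pokes across $\ell$ by a sagitta of at most $\delta^2/4$.

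Next I would carry out the overlap computation. Suppose $Q_a,Q_b$ both intersect $\ell$ from above, with radii $r_a,r_b\ge 1$, feet $x_a,x_b\in I$, and center heights $h_a,h_b$ above $\ell$. Write $h_i=r_i-\epsilon_i$ with $0\le\epsilon_i\le\delta^2/4$ by the sagitta bound, so that $|h_a-h_b|\le |r_a-r_b|+\delta^2/4$. Since $x_a,x_b\in I$ we have $|x_a-x_b|\le 8\delta$, and the squared distance between the two centers is at most $(8\delta)^2+\bigl(|r_a-r_b|+\delta^2/4\bigr)^2$. The disks are disjoint only if this is at least $(r_a+r_b)^2$; subtracting, and using $(r_a+r_b)^2-(r_a-r_b)^2=4r_ar_b$, the required inequality reduces to
\[
4r_ar_b-64\delta^2-\tfrac{\delta^2}{2}\,|r_a-r_b|-\tfrac{\delta^4}{16}\ >\ 0 .
\]
Assuming $r_a\le r_b$, the left side is at least $r_b\bigl(4-\tfrac{\delta^2}{2}\bigr)-64\delta^2-\tfrac{\delta^4}{16}$, which for $r_b\ge 1$ and $\delta=1/29$ is comfortably positive (the dominant term $4$ far exceeds $64\delta^2\approx 0.077$). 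Hence the center distance is strictly less than $r_a+r_b$, the two disks overlap, and they cannot both belong to the disjoint bubble set $\Q$.

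I would then conclude directly: among $Q_1,\dots,Q_h$ at most one intersects $\ell$ from above and at most one from below, so $h\le 2$, and in the case $h=2$ the two disks lie on opposite sides of $\ell$, as claimed. The main obstacle is the first (geometric) step rather than the arithmetic: one must rigorously confine the center to a single side of $\ell$ and control the sagitta, which relies on the emptiness of $Q$ together with the uniform $\delta$-spacing of points on the dense line (so that $I$ lies in the populated part of $\ell$). Once that control is in place, the final separation inequality holds with a large margin, so the value $\delta=1/29$ is used only mildly and no delicate estimate is required.
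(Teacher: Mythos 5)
Your proof is correct: the chord bound $2w\le\delta$ from emptiness, the resulting sagitta bound, and the overlap computation for two same-side large disks with feet within $8\delta$ together give exactly $h\le 2$ with opposite sides. The paper states this as an Observation with no written proof, and your argument is precisely the reasoning its framing presupposes (its definition of ``intersects $\ell$ from above or from below'' already rests on the fact that a large empty disk crosses a populated dense line in a tiny chord), so this matches the intended approach and fills in the omitted details, including the mild caveat that $I$ must lie in the populated part of $\ell$.
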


\begin{observation} \label{O2}
Let $Q \in \Q$ be a large disk incident to at least one point in $P$
on a dense line $\ell$. Let $m_j$ denote the midpoint of the 
subdivision interval $[q_j, q_{j+1}]$, $j \in \NN$, on $\ell$. 

\smallskip
{\rm (i)} If $Q$ is incident to exactly one point on $\ell$, namely $q_i$, 
then $|q_i m_{i+1}| \leq 2\delta$ and $|m_{i-2} q_i | \leq 2\delta$.

\smallskip
{\rm (ii)} If $Q$ is incident to exactly two points on $\ell$, namely $q_i$ and
$q_{i+1}$, then $|m_i m_{i+2}| \leq 2\delta$ and $|m_{i-2} m_i | \leq 2\delta$.
\end{observation}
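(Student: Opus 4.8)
The plan is to pin down the point $\xi(Q,\ell)$ exactly (or, in the one‑point case, to within $\delta/2$) and then to read off the two claimed distance bounds from the fact that the points on a dense line are equally spaced $\delta$ apart, with each $m_j$ sitting halfway between $q_j$ and $q_{j+1}$. I would take $\ell$ to be the $x$-axis, so that $q_i$ sits at abscissa $i\delta$; note that the global shift by $\eps$ moves every point of $\ell$ by the same amount and hence leaves all intra‑line distances, and in particular the positions of the midpoints $m_j$, unchanged. The single geometric fact driving everything is that the chord $Q\cap\ell$ has its midpoint at $\xi(Q,\ell)$: the perpendicular bisector of this chord passes through the center of $Q$ and is perpendicular to $\ell$, hence vertical, so it meets $\ell$ exactly at both the chord midpoint and, by definition, at $\xi(Q,\ell)$.

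I would dispose of case~(ii) first, since it is exact. Here both $q_i$ and $q_{i+1}$ lie on $\partial Q\cap\ell$, and a circle meets a line in at most two points; hence $q_i$ and $q_{i+1}$ are precisely the two endpoints of the chord $Q\cap\ell$. Their midpoint is $m_i$, so $\xi(Q,\ell)=m_i$. The two bounds $|m_im_{i+2}|\le 2\delta$ and $|m_{i-2}m_i|\le 2\delta$ then hold with equality, because consecutive midpoints are $\delta$ apart.

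For case~(i) I would show that $\xi(Q,\ell)$ lies within $\delta/2$ of $q_i$. Since $q_i\in\partial Q\cap\ell$, it is one endpoint of the chord $Q\cap\ell$; let $u$ be the other endpoint. If $u$ were at distance at least $\delta$ from $q_i$, then the neighbor of $q_i$ on that side (namely $q_{i+1}$ or $q_{i-1}$) would lie in the closed disk $\overline Q$ --- on $\partial Q$ if the distance is exactly $\delta$, strictly inside if it exceeds $\delta$ --- contradicting either the hypothesis that $Q$ is incident to $q_i$ alone on $\ell$ or the emptiness of $Q$. Hence $|u\,q_i|<\delta$, so $\xi(Q,\ell)$, being the midpoint of the segment $q_iu$, lies within $\delta/2$ of $q_i$. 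Combined with the equalities $|q_im_{i+1}|=|m_{i-2}q_i|=3\delta/2$ from the regular spacing, this yields the stated bounds; indeed the displaced quantities $|\xi(Q,\ell)\,m_{i+1}|$ and $|m_{i-2}\,\xi(Q,\ell)|$ are each at most $3\delta/2+\delta/2=2\delta$, the uniform bound matching case~(ii).

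Two points merit care rather than being genuine obstacles. First, both the above/below alternative of how $Q$ meets $\ell$ (as recorded in Observation~\ref{O1}) and the left/right choice of the free chord endpoint in case~(i) are handled by the same symmetric argument, so I would treat one representative subcase and invoke symmetry for the other. Second, largeness of $Q$ is not actually needed for the chord estimate --- emptiness alone caps the chord length --- and enters only to guarantee that $Q$ genuinely crosses, or is tangent to, $\ell$, so that $\xi(Q,\ell)$ is well defined and $Q$ sits to one side as in the setting of Observation~\ref{O1}. Unlike Lemmas~\ref{L1} and~\ref{L4}, no delicate numerical estimate is involved: the whole statement reduces to locating a chord midpoint on an equally spaced grid.
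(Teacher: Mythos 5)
Your proof is correct. Note that the paper supplies no argument for this statement at all---it is labeled an Observation and used without proof---so there is no authorial proof to diverge from; your argument (that $\xi(Q,\ell)$ is the midpoint of the chord $Q\cap\ell$, that emptiness plus the incidence hypothesis force the chord endpoint other than $q_i$ to lie within distance $\delta$ of $q_i$, and that everything else is uniform spacing) is exactly the justification the authors presumably considered evident. A genuinely useful feature of your write-up is that you notice the inequalities as literally printed involve only the fixed grid points $q_i$, $m_j$ and are therefore trivially true ($3\delta/2\le 2\delta$ in case (i), $2\delta\le 2\delta$ in case (ii)), and you then prove the stronger localization that the paper actually needs downstream---namely $\xi(Q,\ell)=m_i$ in case (ii) and $|\xi(Q,\ell)-q_i|<\delta/2$ in case (i)---which is the form implicitly invoked in the proof of Corollary~\ref{C1} (horizontal distance between centers of consecutive disks after shrinking $Q$ to tangency) and in the proof of Lemma~\ref{L6} (the bounds of $3\delta$ and $5\delta$ on the distance from $\xi(Q_i,\ell)$ to nearby interval midpoints). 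Your side remark that largeness of $Q$ is needed to make $\xi(Q,\ell)$ well defined is the only slightly off note: incidence to a point of $\ell$ already guarantees $Q\cap\ell\neq\emptyset$, so largeness plays no role anywhere in this particular statement; it matters only in the surrounding context (Observation~\ref{O1} and the definition of the graph $G$).
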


\begin{corollary}\label{C1}
Let $Q \in \Q$ be a disk incident to at least one point in $P$ on a
dense line $\ell$. 

\smallskip
{\rm (i)} If $Q$ is incident to exactly one point on $\ell$, namely $q_i$, 
then there exists a singleton point among $\{q_{i-3},q_{i-2},q_{i-1}\}$,
and one among $\{q_{i+1},q_{i+2},q_{i+3}\}$.

\smallskip
{\rm (ii)} If $Q$ is incident to exactly two points on $\ell$, namely $q_i$ and
$q_{i+1}$, then there exists a singleton point among $\{q_{i-3},q_{i-2},q_{i-1}\}$, 
and one among $\{q_{i+2},q_{i+3},q_{i+4}\}$.
\end{corollary}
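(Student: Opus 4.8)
The plan is to follow the scheme of Lemma~\ref{L2}, upgrading its two ingredients to the refined construction. The quantitative control over where $Q$ meets $\ell$ will come from Observations~\ref{O1} and~\ref{O2}, and the geometric obstruction that forced a singleton in Lemma~\ref{L2}—two small disks carried by nearby consecutive pairs being unable to coexist—will now be supplied by the sharper three-disk bound of Lemma~\ref{L4} rather than by Lemma~\ref{L1}. The reason Lemma~\ref{L1} no longer suffices is quantitative: at the refined spacing $\delta=1/29$ one has $8.02\,\delta^2 > (1-\sqrt{3}/2)\,\delta$, so the crude confinement of Lemma~\ref{L2} no longer places the relevant small disks below the auxiliary line $\ell'$. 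Lemma~\ref{L4}, which takes the actual position of the large disk $Q$ into account, is exactly what repairs this.

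First I would record the reductions. By the symmetry of the construction (up to the negligible perturbation $\eps$) it suffices to treat one side and one contact from below, and, exactly as in the proof of Theorem~\ref{thm:174}, case~(ii) is the bottleneck, since a disk touching two points exerts less pressure than one touching a single point; hence proving the right-hand part of~(ii) yields all four assertions. Observation~\ref{O2} then pins down the contact of $Q$ with $\ell$: in case~(ii) it confines the foot $\xi(Q,\ell)$ to lie between $q_i$ and $q_{i+1}$, so that the target points $q_{i+2},q_{i+3},q_{i+4}$ lie just outside $Q$ within a short subinterval of $\ell$. Because $Q$ is large it occupies one side of $\ell$ throughout a neighbourhood of $q_i$ of width $\Omega(1)\gg\delta$; consequently any empty disk supported by a consecutive pair among these nearby points must lie on the opposite side of $\ell$, crossing $\ell$ only in a tiny cap. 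This is precisely the configuration analysed in Lemma~\ref{L4}, with $Q$ in the role of the unit disk $C$ and the small disks in the roles of $C_1,C_2$.

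The heart of the proof is the contradiction. Assume that none of $q_{i+2},q_{i+3},q_{i+4}$ is a singleton; then each supports a disk containing a second point of $P$. I would first invoke Observation~\ref{O1} to exclude large second disks: any large disk through one of these points has its foot in the same interval of length $8\delta$ as $\xi(Q,\ell)$, and Observation~\ref{O1} permits at most two large disks there, from opposite sides, with one slot already occupied by $Q$. Since only consecutive points can be matched by an empty small disk, the non-singleton obligations therefore produce small disks on consecutive pairs. Tracing the possible matchings—including the chain configurations in which an interior point supports two such disks—one extracts two small disks carried by two \emph{disjoint} consecutive pairs among four consecutive points, reaching out to a further neighbour if necessary. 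Together with $Q$ these reproduce the configuration of Lemma~\ref{L4}; and since $\delta=1/29 < 2\sqrt{\lambda}=0.03486\ldots$, decreasing the spacing below the tangency value of Lemma~\ref{L4} turns pairwise tangency into pairwise overlap, contradicting the disjointness of the disks in $\Q$. Part~(i) follows by the same computation with the parameter set for the one-point contact, and the left-hand assertions follow by symmetry.

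I expect the main obstacle to be the combinatorial extraction in the last paragraph, namely converting the hypothesis ``all three target points are non-singletons'' into a clean instance of Lemma~\ref{L4}. The delicate points are: (a) excluding large second disks uniformly over the three points by means of Observation~\ref{O1}; (b) handling the chain configurations, so as to always exhibit two small disks on \emph{disjoint} pairs rather than two disks sharing an endpoint; and (c) making the confinement and monotonicity step rigorous, i.e.\ verifying that the strict inequality $\delta < 2\sqrt{\lambda}$ upgrades the exact tangency of Lemma~\ref{L4} to strict overlap, so that the contradiction is genuine.
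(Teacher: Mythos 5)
Your proposal takes essentially the same route as the paper's own proof: reduce to the two-point case~(ii) as the bottleneck, use Observation~\ref{O2} to bound the horizontal offsets by $2\delta$ and $4\delta$, extract two disks on the disjoint pairs $(q_{i+2},q_{i+3})$ and $(q_{i+4},q_{i+5})$, shrink $Q$ until it is tangent to $\ell$, and derive a contradiction from Lemma~\ref{L4} since $\delta=1/29<2\sqrt{\lambda}$. If anything you are more cautious than the paper, whose entire proof consists of this three-disk argument (with part~(i) dispatched as ``similar'' via Observation~\ref{O2}(i)) and which does not discuss the second-large-disk and shared-endpoint configurations that you flag as delicate points.
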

\begin{proof}
We first prove statement (ii).
Let $Q_j$ denote a disk incident to the points $q_{j}$ and $q_{j+1}$. 
If there exist two disks $Q_{i+2}$ and $Q_{i+4}$ such that
the three disks $Q=Q_i$, $Q_{i+2}$, and $Q_{i+4}$ are pairwise disjoint,
shrink $Q$ until it becomes tangent to $\ell_3$ (if not already tangent).
The three disks remain pairwise disjoint. 
By Observation~\ref{O2}(ii), the horizontal distance between the
centers of any two consecutive disks is at most $2\delta$. 
However, by the choice of $\delta$ in Lemma~\ref{L4}, the three disks must be
pairwise tangent in the final configuration, a contradiction. 

Statement (i) is proved by a similar argument, using the two inequalities
in Observation~\ref{O2}(i).
\end{proof}

For two disks $Q_1,Q_2 \in \Q$ intersecting a common line $\ell$,
denote by $f(Q_1,Q_2)$ the number of points of $P$ in the interval
$[\xi(Q_1,\ell),\xi(Q_2,\ell)]$ that are not incident to  
$Q_1$ or $Q_2$. 

Define an undirected graph $G$ with vertex set the set of large disks
in $\Q$ incident to points on dense lines in $\L$. Two disks $Q_1,Q_2 \in
\Q$ are adjacent in $G$ if they are each incident to points of $P$
on a common dense line $\ell \in \L$, and $f(Q_1,Q_2) \in
\{0,1,2,3,4,5\}$. By Observation~\ref{O1}, if $Q_1,Q_2 \in \Q$ 
are adjacent in $G$ then the two large disks intersect the common line
one from below, and one from above. 

\begin{lemma}\label{L5}
Let $a \leq 6$ be a positive integer, and 
$\delta \leq 1/29 < 2\sqrt{\lambda} =0.03486\ldots$.
Define the function
$$ g(a,\delta) =\frac{a^2 \delta^2}{1+\sqrt{1-a^2 \delta^2}}. $$
Then $g()$ is an increasing function of $a$,
and $g(3,\delta) + g(6,\delta) \leq 0.9 \delta$. 
\end{lemma}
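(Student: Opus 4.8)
The plan is to first clear the square root from the denominator of $g$. Rationalizing,
\[
g(a,\delta) = \frac{a^2\delta^2}{1+\sqrt{1-a^2\delta^2}} = 1 - \sqrt{1-a^2\delta^2},
\]
which is valid whenever $a^2\delta^2 \le 1$. This holds throughout the relevant range, since $a \le 6$ and $\delta \le 1/29$ give $a\delta \le 6/29 < 1$, so all radicals are real. This closed form is exactly the sagitta of a unit circle cut by a chord at horizontal offset $a\delta$, i.e.\ the same kind of vertical gap estimated in Lemma~\ref{L2}.

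Monotonicity in $a$ is then immediate: for fixed $\delta>0$, the quantity $1-a^2\delta^2$ strictly decreases as $a$ increases, hence $\sqrt{1-a^2\delta^2}$ decreases and $g(a,\delta) = 1-\sqrt{1-a^2\delta^2}$ increases. (Equivalently, in the rationalized form the numerator $a^2\delta^2$ increases while the denominator $1+\sqrt{1-a^2\delta^2}$ decreases.)

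For the second assertion I would reduce the two-variable inequality to a single numerical check. Writing $h(\delta) = g(a,\delta)/\delta = \dfrac{a^2\delta}{1+\sqrt{1-a^2\delta^2}}$, observe that on $(0,1/a)$ its numerator is increasing in $\delta$ while its (positive) denominator is decreasing in $\delta$; a positive increasing function divided by a positive decreasing function is increasing, so $h$ is increasing in $\delta$. Consequently $\big(g(3,\delta)+g(6,\delta)\big)/\delta$ is increasing in $\delta$, so over the range $0<\delta \le 1/29$ it is maximized at the endpoint $\delta = 1/29$. It then suffices to verify
\[
\frac{9/29}{1+\sqrt{1-9/841}} + \frac{36/29}{1+\sqrt{1-36/841}} \le 0.9,
\]
and a direct computation gives a left-hand side of $0.783\ldots$, comfortably below $0.9$. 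Multiplying back by $\delta$ yields $g(3,\delta)+g(6,\delta) \le 0.9\,\delta$ for every $\delta \le 1/29$, as claimed.

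There is no genuine obstacle here. The only points requiring care are confirming $a\delta<1$ so the radicals are defined (guaranteed by $a\le 6$, $\delta\le 1/29$), and the monotonicity-in-$\delta$ reduction that collapses the inequality to one evaluation at $\delta=1/29$; once that reduction is in place, the comfortable gap between $0.783\ldots$ and $0.9$ means even crude bounds on the two radicals close the argument.
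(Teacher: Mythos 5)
Your proof is correct, but it is routed somewhat differently from the paper's. The paper handles the inequality uniformly in $\delta$ rather than reducing to an endpoint: since $\delta \le 1/29$, the two denominators satisfy $1+\sqrt{1-9\delta^2} \ge 1.99$ and $1+\sqrt{1-36\delta^2} \ge 1.97$, whence $g(3,\delta)+g(6,\delta) \le 9\delta^2/1.99 + 36\delta^2/1.97 \le 23\delta^2$, and then $23\delta^2 \le 0.9\delta$ because $23\delta \le 23/29 < 0.9$. You instead rationalize to the sagitta form $g(a,\delta) = 1-\sqrt{1-a^2\delta^2}$ --- which also yields a clean proof of monotonicity in $a$, a point the paper dismisses as ``easy to verify'' --- and then show that $g(a,\delta)/\delta$ is increasing in $\delta$ (positive increasing numerator over positive decreasing denominator), collapsing the two-variable inequality to the single evaluation at $\delta = 1/29$, where the sum is $0.783\ldots < 0.9$. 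The paper's route is shorter: two crude denominator bounds and no monotonicity lemma, at the cost of the slightly lossier constant $23/29 = 0.793\ldots$. Yours buys the sharp endpoint value and an honest argument for the first claim, and the closed form makes the geometric meaning of $g$ (the vertical sagitta of a unit disk over a chord of half-width $a\delta$) transparent, which is exactly how $g$ arises in Lemma~\ref{L6}. Both arguments are elementary, exploit only $\delta \le 1/29$, and are fully rigorous.
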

\begin{proof}
The first claim is easy to verify, so it remains to check the second
claim. We have
\begin{align*}
g(3,\delta) + g(6,\delta) &= \frac{9\delta^2}{1+\sqrt{1-9\delta^2}}
+ \frac{36\delta^2}{1+\sqrt{1-36\delta^2}} \\
&\leq \frac{9\delta^2}{1.99} + \frac{36\delta^2}{1.97} \leq
23\delta^2 \leq 0.9 \delta, 
\end{align*}
as required.
\end{proof}

\begin{lemma}\label{L6}
Let $\ell \in \L$ be a dense horizontal line, and 
$Q_1,Q_2 \in \Q$ be two large disks intersecting $\ell$ 
(one from above and one from below, with the center of $Q_1$ left of 
the center of $Q_2$), such that the pair $(Q_1,Q_2)$ is an edge in $G$. 
Then there exist $4$ singleton points on $\ell$ 
uniquely associated with this edge of $G$. 
\end{lemma}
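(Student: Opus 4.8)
The plan is to exhibit the four singletons as two \emph{outer} points (one left of $Q_1$, one right of $Q_2$) and two \emph{inner} points (one just right of $Q_1$, one just left of $Q_2$), all on $\ell$, and then to argue that the four are distinct and pinned to this edge. First I would fix the orientation: by Observation~\ref{O1} the two large disks meet $\ell$ from opposite sides, so without loss of generality $Q_1$ comes from below and $Q_2$ from above. Let $q_a$ be the rightmost point of $\ell$ incident to $Q_1$ and $q_b$ the leftmost point incident to $Q_2$, so that $a<b$ and the edge condition $f(Q_1,Q_2)\le 5$ forces $b-a\le 6$.

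The two outer singletons come for free from Corollary~\ref{C1}. Applied to $Q_1$ on its left and to $Q_2$ on its right, it yields a singleton $s_1$ among the three points of $\ell$ immediately left of the leftmost point incident to $Q_1$, and a singleton $s_4$ among the three points immediately right of $q_b$. By their position both lie strictly outside the interval $[\xi(Q_1,\ell),\xi(Q_2,\ell)]$, so they are automatically distinct from each other and from anything located inside that interval.

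The inner singletons are the crux. The points of $P$ on $\ell$ lying between the two disks are squeezed from opposite sides: from below by $Q_1$ and from above by $Q_2$. Using Observation~\ref{O2} to locate $q_a$ and $q_b$ relative to $\xi(Q_1,\ell)$ and $\xi(Q_2,\ell)$, and Lemma~\ref{L5} to bound the combined vertical reach of the two large disks at horizontal offsets up to $6\delta$ by $g(3,\delta)+g(6,\delta)\le 0.9\delta<\delta$, I would rerun the tangency obstruction of Lemma~\ref{L4}. After shrinking each large disk until it is tangent to $\ell$ (which preserves disjointness, exactly as in the proof of Corollary~\ref{C1}), a chain of paired small disks threading the inner points $q_{a+1},\ldots,q_{b-1}$ together with the adjacent large disk would be forced to be pairwise disjoint while fitting inside a vertical slab thinner than the $\delta=1/29$ threshold of Lemma~\ref{L4}, which is impossible. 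This forces a singleton $s_2$ among $\{q_{a+1},q_{a+2},q_{a+3}\}$ and a singleton $s_3$ among $\{q_{b-3},q_{b-2},q_{b-1}\}$, both strictly inside $[\xi(Q_1,\ell),\xi(Q_2,\ell)]$.

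The main obstacle I anticipate is showing that $s_2$ and $s_3$ are genuinely distinct, since when $f(Q_1,Q_2)$ is small the two windows $\{q_{a+1},q_{a+2},q_{a+3}\}$ and $\{q_{b-3},q_{b-2},q_{b-1}\}$ can overlap (at worst at $q_{a+3}$ when $b=a+6$). The resolution is precisely the double-sided squeeze: because $Q_1$ and $Q_2$ press from opposite sides, their vertical reaches accumulate rather than compete near $\ell$, and the slack $g(3,\delta)+g(6,\delta)\le 0.9\delta$ furnished by Lemma~\ref{L5} is exactly what forces \emph{two} separate failures of the inner matching rather than one, so $s_2$ and $s_3$ may be chosen distinct. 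Finally, uniqueness of the association to this edge follows because each of $s_1,s_2,s_3,s_4$ is confined to a bounded window anchored at the incident points $q_a$ or $q_b$ of this specific edge, and by Observation~\ref{O1} at most two large disks project into any interval of length $8\delta$ on $\ell$; hence no singleton can be charged to a second edge.
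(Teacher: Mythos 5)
Your plan (one outer singleton per side from Corollary~\ref{C1}, plus two inner singletons from the two-sided squeeze) coincides with the paper's argument only in the intermediate cases, and it has a genuine gap exactly where the paper performs a case analysis on $f=f(Q_1,Q_2)\in\{0,1,2,3,4,5\}$: the cases $f\in\{0,1\}$. If $f=0$ there are \emph{no} points of $P$ strictly between $q_a$ and $q_b$, and if $f=1$ there is exactly one; so the two inner singletons $s_2,s_3$ your decomposition requires simply do not exist, and no squeezing argument can create them. You anticipate this obstacle but misdiagnose it: the overlap of the windows $\{q_{a+1},q_{a+2},q_{a+3}\}$ and $\{q_{b-3},q_{b-2},q_{b-1}\}$ is worst when $b-a$ is \emph{smallest} (at $b=a+1$ neither window contains any inner point at all), not at $b=a+6$; and the assertion that $g(3,\delta)+g(6,\delta)\le 0.9\delta$ ``forces two separate failures of the inner matching'' is not an argument---with zero or one inner point there is nothing that can fail twice. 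Since Corollary~\ref{C1} supplies only one outer singleton per side, for $f\in\{0,1\}$ your construction produces at most two or three singletons, not four.

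The paper closes precisely this hole by relocating the singletons when $f\le 1$: it shows that \emph{two} consecutive points on each side just beyond the disks' incident points (in its notation $q_{j+1},q_{j+2}$ to the right of $Q_2$, and symmetrically $q_{i-1},q_{i-2}$ to the left of $Q_1$) are singletons. This works because both large disks have radius at least $1$ and hence still overhang these outer points horizontally, so a small disk through two consecutive points there---which has vertical extent at least $\delta$---would have to fit in a gap of height at most $g(5,\delta)+g(3,\delta)\le g(6,\delta)+g(3,\delta)\le 0.9\delta<\delta$, a contradiction. Note also that this sandwich bound (Lemma~\ref{L5}) is the only tool needed for the inner points when $f\ge 2$, where the paper shows \emph{all} inner points are singletons (which is also what guarantees distinctness when your two inner windows overlap); your appeal to the tangency computation of Lemma~\ref{L4}, via shrinking disks until tangent to $\ell$, is the mechanism of the one-sided situation in Corollary~\ref{C1} and is neither needed nor directly applicable between two disks pressing from opposite sides. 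To repair the proof, split into cases on $f$ and, for $f\in\{0,1\}$, replace the two inner singletons by a second outer singleton on each side obtained from the two-sided sandwich bound.
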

\begin{proof}
Let $r_i \geq 1$ be the radius of $Q_i$, $i=1,2$. 
Let $q_i$ be the leftmost point on $\ell$ incident to 
$Q_1$ and $q_j$ be the rightmost point on $\ell$ incident to 
$Q_2$; clearly $i \leq j$. Write $f:=f(Q_1,Q_2)$ for simplicity. We
distinguish $6$ cases.    

\smallskip
\emph{Case 0.} $f=0$. It suffices to show that $q_{j+1}$ and $q_{j+2}$
are singleton points. 
By a symmetric argument $q_{i-1}$ and $q_{i-2}$ are then also
singleton points. 
We will show that a small disk incident to both
$q_{j+2}$ and $q_{j+3}$ cannot be in $\Q$. 
Obviously, the diameter of any such disk is at least $\delta$. 
The same argument will show that a small disk incident to both
$q_{j+1}$ and $q_{j+2}$ cannot be in $\Q$. 

Assume otherwise, for contradiction.
Observe that the horizontal distance between $\xi(Q_2,\ell)$ and 
the midpoint of $[q_{j+2},q_{j+3}]$ is at most $3\delta$.
Similarly, the horizontal distance between $\xi(Q_1,\ell)$ and 
the same midpoint is at most $5\delta$. 
We now verify that the ``sandwich'' $Q_1,Q_2$ forces the 
vertical diameter of a disk incident to both $q_{j+2}$ and $q_{j+3}$
to be smaller than $\delta$. Indeed, this vertical diameter is at most
$y_1+y_2$, where $y_i$, $i=1,2$, is the vertical distance between the
midpoint of $[q_{j+2},q_{j+3}]$ and the corresponding disk. We have
$$ y_1 \leq r_1 -\sqrt{r_1^2 -25 \delta^2} =
\frac{25\delta^2}{r_1+ \sqrt{r_1^2 -25 \delta^2}} \leq
\frac{25\delta^2}{1+ \sqrt{1-25 \delta^2}} =g(5,\delta), $$
and
$$ y_2 \leq r_2 -\sqrt{r_2^2 -9 \delta^2} =
\frac{9\delta^2}{r_2+ \sqrt{r_2^2 -9 \delta^2}} \leq
\frac{9\delta^2}{1+ \sqrt{1-9 \delta^2}} =g(3,\delta). $$

By Lemma~\ref{L5}, 
$$ y_1+y_2 = g(5,\delta) + g(3,\delta)
\leq g(6,\delta) + g(3,\delta) \leq 0.9\delta, $$
which contradicts the minimum-$\delta$ requirement on the diameter. 
It follows that $q_{j+1}$ and $q_{j+2}$  are singleton points.

\smallskip
\emph{Case 1.} $f=1$. As in Case 0, it suffices to show that 
a small disk incident to both $q_{j+2}$ and $q_{j+3}$ cannot be in $\Q$. 
Assume otherwise, for contradiction.
Observe that the horizontal distance between $\xi(Q_2,\ell)$ and 
the midpoint of $[q_{j+2},q_{j+3}]$ is at most $3\delta$.
Similarly, the horizontal distance between $\xi(Q_1,\ell)$ and 
the same midpoint is at most $6\delta$. 
As in Case 0, we check that 
$y_1 + y_2 \leq g(6,\delta) + g(3,\delta) \leq 0.9\delta$, 
which holds by Lemma~\ref{L5}.

\smallskip
\emph{Case 2.} $f=2$. By Corollary~\ref{C1}, $q_{i-1}$ and $q_{j+1}$ are 
singleton points. It suffices to show that the two points of $P$ on
$\ell$ in between $Q_1$ and $Q_2$ are singleton points. 
The relevant inequality is
$g(2,\delta) + g(2,\delta) \leq 0.9\delta$, 
which holds by Lemma~\ref{L5}.

\smallskip
\emph{Case 3.} $f=3$. By Corollary~\ref{C1}, $q_{i-1}$ and $q_{j+1}$ are 
singleton points. It suffices to show that the three points of $P$ on
$\ell$ in between $Q_1$ and $Q_2$ are singleton points. 
The relevant inequality is
$g(2,\delta) + g(3,\delta) \leq 0.9\delta$, 
which holds by Lemma~\ref{L5}.

\smallskip
\emph{Case 4.} $f=4$. 
It suffices to show that the four points of $P$ on
$\ell$ in between $Q_1$ and $Q_2$ are singleton points. 
The relevant inequalities are
$g(2,\delta) + g(4,\delta) \leq 0.9\delta$, and
$g(3,\delta) + g(3,\delta) \leq 0.9\delta$, which hold by Lemma~\ref{L5}.

\smallskip
\emph{Case 5.} $f=5$. 
It suffices to show that the five points of $P$ on
$\ell$ in between $Q_1$ and $Q_2$ are singleton points. 
The relevant inequalities are
$g(2,\delta) + g(5,\delta) \leq 0.9\delta$, and
$g(3,\delta) + g(4,\delta) \leq 0.9\delta$, which hold by
Lemma~\ref{L5}.

The end of our case analysis concludes the proof of the lemma.
\end{proof}

The connected components of $G$ are vertex-disjoint paths;
see~Figure~\ref{f4} for an example. 
\begin{figure}[htb]
\centerline{\epsfxsize=4.5in \epsffile{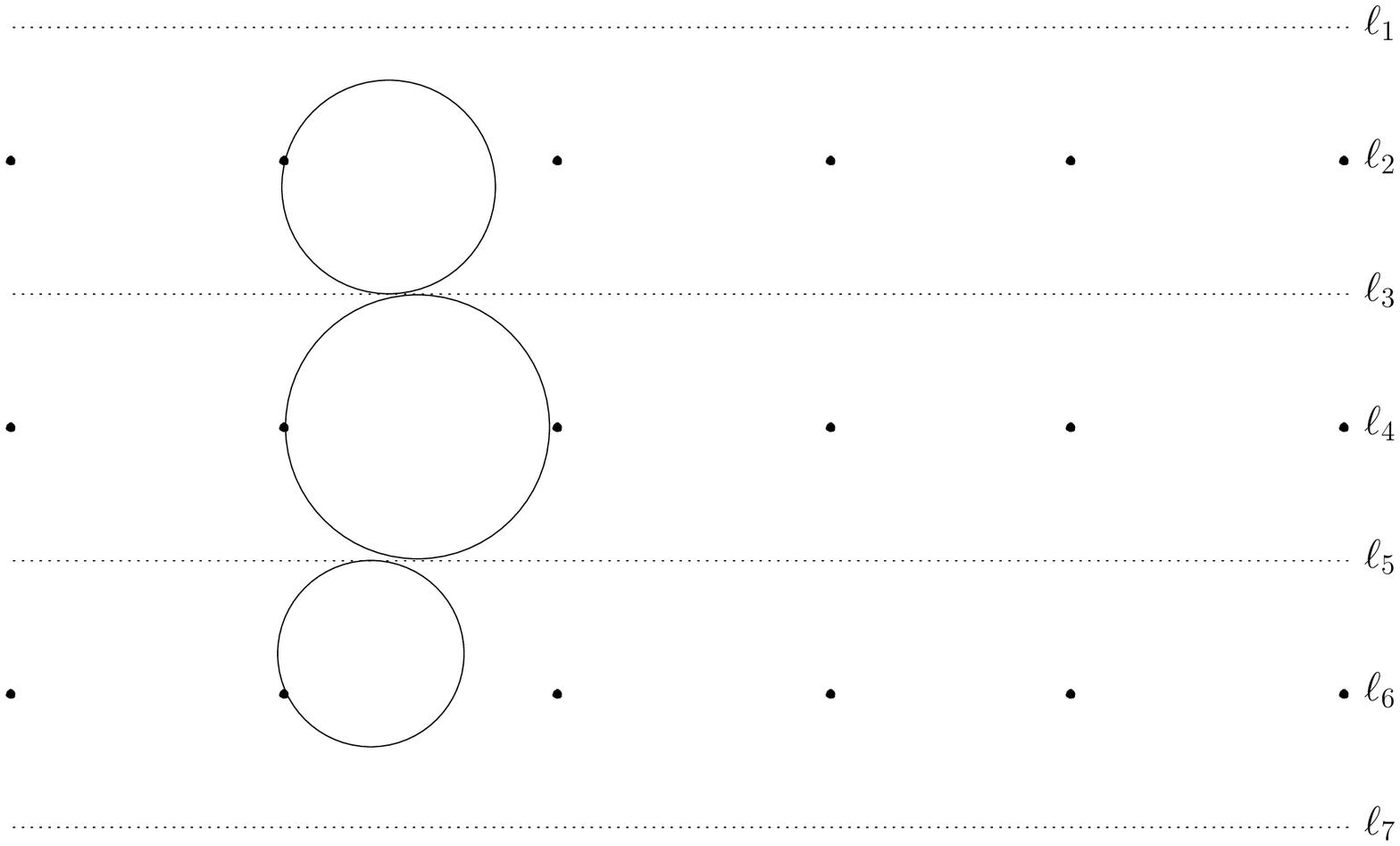}}
\caption{Construction and a $3$-vertex path in $G$.} 
\label{f4}
\end{figure}

\begin{lemma}\label{L7}
Let $C$ be a connected component of $G$ with $i$ vertices.
Then there exists a subset $\Q_1 \subset \Q$ of disks 
such that $C \subset \Q_1$ and $\Q_1$ is supported by exactly $m$ points
of $P$, where $|\Q_1| \geq (m+i)/2$. 
\end{lemma}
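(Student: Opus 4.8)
The plan is to mimic the counting in Lemma~\ref{L3}, assembling $\Q_1$ from the $i$ disks of the path $C$ together with an auxiliary family $S$ of singleton disks, and to first reduce the asserted inequality to a bound on $|S|$. Let $p_C$ denote the number of points of $P$ supporting the disks of $C$, and let $c_t \le 3$ be the number of points supporting the $t$-th disk of $C$, so that $p_C \le \sum_t c_t \le 3i$. Each singleton disk in $S$ is supported by a single point that is incident to no other disk of $\Q$; hence, once the singleton points are pairwise distinct and distinct from the support points of $C$, we have $|\Q_1| = i + |S|$ and $m = p_C + |S|$. A short rearrangement shows that the target $|\Q_1| \ge (m+i)/2$ is equivalent to $|S| \ge p_C - i$. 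Thus it suffices to exhibit $p_C - i$ pairwise-distinct singleton points whose supporting disks avoid $C$.

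To produce them I would split into the trivial and the generic case. If $i = 1$, then $C = \{Q\}$ with $c_1 \le 3$, and the required $|S| \ge c_1 - 1 \in \{0,1,2\}$ singleton points are furnished by Corollary~\ref{C1} applied to the one or two dense-line incidences of $Q$ (using that a disk meets each line in at most two points, since three collinear points are never co-circular, and meets at most one sparse line); this is exactly the case analysis of Lemma~\ref{L3}. If $i \ge 2$, I would instead harvest singleton points edge by edge: each of the $i-1$ edges of the path joins two large disks intersecting a common dense line with $f \le 5$, so Lemma~\ref{L6} supplies $4$ singleton points \emph{uniquely associated with that edge}. Taking $S$ to be the union of these gives $|S| = 4(i-1)$, and then the arithmetic closes immediately, since $4(i-1) \ge 2i = 3i - i \ge p_C - i$ for every $i \ge 2$; in fact there is slack, so one need not even invoke the endpoints.

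The step I expect to be the main obstacle is the hidden hypothesis in the reduction: that the singleton points collected are genuinely pairwise distinct and that none of them already supports a disk of $C$. The delicate point is that Lemma~\ref{L6} does not always place its four points between $Q_1$ and $Q_2$; depending on $f$ some of them lie just outside one of the two disks, precisely where the points claimed for a neighbouring edge could also sit. I would discharge this using the positional content of Lemma~\ref{L6} and Corollary~\ref{C1} together with Observation~\ref{O1}: every point assigned to an edge lies within $O(\delta)$ of that edge's location on its line, whereas Observation~\ref{O1} limits each length-$8\delta$ window of a dense line to at most two disk footprints, so the neighbourhoods hosting the points assigned to distinct edges are disjoint and the global assignment is injective --- which is exactly what the phrase ``uniquely associated'' in Lemma~\ref{L6} encodes. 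The same separation shows the chosen points are not incident to any disk of $C$: since the components of $G$ are paths, each endpoint has degree at most one and each internal disk degree exactly two, so any further disk of $C$ meeting such a point on a dense line would force a third $G$-adjacency at $Q_1$ or $Q_2$, contradicting both the path structure and the fact that the point is a singleton point.
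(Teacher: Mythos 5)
Your proof is correct and takes essentially the same route as the paper: for $i \ge 2$, $\Q_1$ consists of the $i$ path disks plus the $4(i-1)$ singleton disks that Lemma~\ref{L6} attaches to the edges, and your reduction to $|S| \ge p_C - i$ is just an algebraic rearrangement of the paper's count $|\Q_1| = 5i-4 \ge 4i-2 = ((7i-4)+i)/2 \ge (m+i)/2$. The one substantive difference is to your credit: the paper's proof only treats $i \ge 2$ (indeed its inequality $5i-4 \ge 4i-2$ fails at $i=1$), whereas you dispose of $i=1$ explicitly via Corollary~\ref{C1} in the style of Lemma~\ref{L3}, and your injectivity discussion spells out what the paper compresses into the phrase ``uniquely associated'' in Lemma~\ref{L6}.
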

\begin{proof}
Recall that $C$ is a path of $i$ vertices in $G$, for some $i \geq 1$. 
If $i \geq 2$, each of the $i-1$ edges of this path is uniquely
associated with a dense line in $\L$. By Lemma~\ref{L6}, for each edge
in $C$ there exist $4$ singleton points uniquely associated with this edge.
Let $\Q_1$ consist of the $i$ large disks (in $C$) and the $4(i-1)$
singleton disks associated  with the $i-1$ edges of $C$. 
Thus $|\Q_1|=i+4(i-1)=5i-4$. Since each of the $i$ large disks 
in $C$ is incident to at most $3$ points in $P$, we have
$ m \leq 3i + 4(i-1)=7i-4$. It is now easy to verify that
$|\Q_1| = 5i-4 \geq 4i-2= ((7i-4)+i)/2 \geq (m+i)/2$, as required.
Note that equality is possible when $i=2$. 
\end{proof}

We can now finalize the proof of Theorem~\ref{thm:236} as follows.
We have set $\delta=1/29$, thus $4/\delta=116$. 
Consider a connected component of $G$, say involving 
$i$ large disks incident to at most $i$ sparse points. 
(Recall that no large disk can be incident to two sparse points.)
Let $\Q_2 \subset \Q$ be the subset of disks constructed
from $\Q'$ by taking the union of all disks in $\Q_1$ 
in the proof of Lemma~\ref{L7}.
Clearly $\Q' \subset \cup \Q_1 = \Q_2 \subset \Q$. 
Put $d=|\Q_2|$. Let $T \subset P$ be the set of points incident to disks
in $\Q_2$; write $t=|T|$. Observe that each connected component
yields one inequality of the form $|\Q_1| \geq (m+i)/2$,
so each interior point generates a ``surplus'' of $1/2$.
The corresponding sets $\Q_1$ are pairwise disjoint by construction, thus
by adding up all these inequalities yields
$$ |\Q_2| \geq \frac{t+(j-1)k}{2}. $$
Apart from $O(k)$ exceptions, each disk in $\Q \setminus \Q_2$ is
incident to at most two points in $P$. Consequently,
\begin{equation} \label{E20}
|\Q| = |\Q_2| + |\Q \setminus \Q_2| \geq 
\frac{t+(j-1)k}{2} + \frac{n-t}{2} -O(k) =
\frac{n+(j-1)k}{2} -O(k). 
\end{equation}

\paragraph{Analysis.}
We let $j=\Theta(\sqrt{n})$, and $k=\Theta(\sqrt{n})$, 
satisfying~\eqref{E17}. Analogous to~\eqref{E18} and~\eqref{E19}, 
by substituting the value of $(j-1)k$ resulting from~\eqref{E17}, 
it follows that the number of disjoint empty disks in a minimizing bubble
set is at least  
\begin{align*}
\frac{n}{2} + \frac{(j-1)k}{2} -O(k) =
\frac{n}{2} + \frac{n}{8/\delta+4} -O(j+k) =
\frac{n}{2} + \frac{n}{236} -O(\sqrt{n}).
\end{align*} 

This construction can be also extended for every $n$, by adding a
small cluster of at most $O(\sqrt{n})$ collinear points on one of the
lines. The above lower bound is preserved and this ends the proof of
Theorem~\ref{thm:236}. 
\qed

\section{Concluding remarks}

Via suitable slight perturbations, our lower bound constructions can be
realized with points in general position, for instance with no three
on a line and no four on a circle.  
However, the analysis needs to be adapted, and we leave the details to
the reader.

In our linear construction points are distributed uniformly
on the dense lines. One can reduce the number of points 
on these lines by making a non-uniform distribution,
with points farther away from each other when $x$ is close to odd
multiples of $2+\delta/2$. The reason is that the radius of a disk in $\Q$
incident to an interior point $p \in \ell_2$ and intersecting
$\ell_1$ or $\ell_3$ farther away horizontally from $p$ must be
significantly larger than $1$, a value which was used for simplicity
in our calculation. We however opted to leave out the non-uniform
distribution due to its complicated analysis.

While we believe it is possible to further raise the lower
bound $D(n) \geq n/2 + n/236 -O(\sqrt{n})$, \eg, replace 236 by 190 
by making this and other changes, making progress beyond this bound
probably requires new ideas. 
From the other direction, it is very likely that the 
upper bound $7n/8 +O(1)$ can be substantially reduced.

\paragraph{Acknowledgement.}
We thank an anonymous source for bringing the articles of
Dillencourt~\cite{Di90} and \'Abrego~\etal~\cite{AA+05,AA+09}
to our attention and for indicating that Proposition~\ref{prop} follows
from the result of Dillencourt~\cite{Di90}.

\end{document}